\theoremstyle{definition}
\newtheorem{defi}{Definition}[section] 
\theoremstyle{plain}
\newtheorem{lemma}[defi]{Lemma} 
\newtheorem{prop}[defi]{Proposition}
\newtheorem{theo}[defi]{Theorem}
\newtheorem{corollary}[defi]{Corollary}
\newtheorem*{maintheo}{Main Theorem}
\newtheorem*{theo41}{Theorem \ref{thm 41}}
\newtheorem*{theo52}{Theorem \ref{thm 52}}
\theoremstyle{remark}
\newtheorem{remark}[defi]{Remark}
\newtheorem{exemple}[defi]{Example}
\newcommand{\C}{\mathbb{C}}
\newcommand{\R}{\mathbb{R}}
\newcommand{\Z}{\mathbb{Z}}
\newcommand{\N}{\mathbb{N}}
\newcommand{\F}{\mathbb{F}}
\newcommand{\NN}{\mathcal{N}}
\title[The $L^2$-Alexander invariant is stronger than (genus,volume)]{The $L^2$-Alexander invariant is stronger than the genus and the simplicial volume}
\author{Fathi Ben Aribi}
\address{Universit\'e de Gen\`eve, Section de math\'ematiques, 2-4 rue du Li\`evre, Case postale 64 1211 Gen\`eve 4, Suisse}
\email{fathi.benaribi@unige.ch}
\begin{document}

\renewcommand{\proofname}{Proof}

\subjclass[2010]{57M25; 57M27}
\keywords{$L^2$-invariants; $3$-manifolds; hyperbolic volume; fibered knots}

\maketitle

\begin{abstract}
We study how the genus, the simplicial volume and the $L^2$-
Alexan-der 
invariant of W. Li and W. Zhang can detect individual knots among all others. In particular, we use various techniques coming from hyperbolic geometry and topology to prove that the $L^2$-Alexander invariant contains strictly more information than the pair (genus, simplicial volume). Along the way we prove that the $L^2$-Alexander invariant
detects the figure-eight knot $4_1$, the twist knot $5_2$ and an infinite family of cables on the figure-eight knot.

\end{abstract}

\section{Introduction}
The strength of a knot invariant is inexorably tied to how well it can distinguish two given knots. In particular, invariants that can detect individual knots among all others hold great interest. 

The $L^2$-Alexander invariant $\Delta_K^{(2)}$ of a knot $K$ is a knot invariant taking values in the class of maps on the positive real numbers up to multiplication by monomials. It was originally constructed by W. Li and W. Zhang in \cite{LZ06a} from a Wirtinger presentation of the knot group, as an infinite-dimensional version of Fox's construction of the Alexander polynomial (see \cite{fox}). Li and Zhang proved that the value $\Delta_K^{(2)}(t=1)$ of the invariant equals the $L^2$-torsion of the exterior of the knot $K$, which is exactly given by the simplicial volume $\mathrm{vol}(K)$ from a theorem of W. Lück and T. Shick (see \cite{LS99}).

J. Dubois and C. Wegner then proved in \cite{DW} that one could compute the $L^2$-Alexander invariant from any deficiency one presentation of the knot group. They also computed that the invariant of a torus knot $K$ is equal to $(t \to \max(1,t))$ to the power twice the genus $g(K)$ of the torus knot.

In \cite{BA13}, the author generalized this property to the class of iterated torus knots by proving a connected sum formula and a cabling formula for the $L^2$-Alexander invariant. As a consequence, the author proved that the invariant detects the unknot, as well as the trefoil knot $3_1$ up to mirror image in \cite{BAthesis}.

J. Dubois, S. Friedl and W. Lück then proved in \cite{DFLdual, DFL} that the $L^2$-Alexander invariant is always symmetric and that in the case of a fibered knot, its extreme values are monomial with span of degree twice the genus of the knot.

Recently, Friedl-Lück and Y. Liu proved in \cite{FL,Liu} that the $L^2$-Alexander invariant $\Delta_K^{(2)}$ detects the genus $g(K)$ of any knot $K$, fibered or not, by comparing the behaviours of $\Delta_K^{(2)}(t)$ at $t\to 0^+$ and $t \to + \infty$.

\bigskip

It is natural to ask how many knots can be individually detected by the $L^2$-Alexander invariant, and whether or not the invariant is reduced to the pair (genus, simplicial volume) it was proven to contain. 
The Main Theorem of this article answers simultaneously both these questions, respectively by ``an infinite number'' and ``the $L^2$-Alexander invariant is strictly stronger''.
%
\begin{maintheo}[Theorem \ref{thm inf}]
There exists two infinite families of fibered knots $(J_n)_{n  \in \N}$ and $(K_n)_{n  \in \N}$ such that for all $n \in \N$ :
\begin{enumerate}
\item $g(J_n) = g(K_n)$,
\item $\mathrm{vol}(J_n) = \mathrm{vol}(K_n)$,
\item $\Delta_{J_n}^{(2)} \neq \Delta_{K_n}^{(2)}$,
\item $K_n$ is the only knot (up to mirror image and change of orientation) with $L^2$-Alexander invariant equal to $\Delta_{K_n}^{(2)}$.
\end{enumerate}
\end{maintheo}
The knots $J_n$ are explicitily constructed as connected sums of one figure-eight knot and several trefoil knots, whereas the knots $K_n$ are cables of the figure-eight knot. They have same genus and same volume for each value $n$, but different $L^2$-Alexander invariants.
This theorem thus establishes that the $L^2$-Alexander invariant contains \textit{strictly more information} than the pair (genus, simplicial volume), and detects an \textit{infinite number} of knots.

\bigskip

As a stepping stone to the Main Theorem, we prove that the
pair (genus,volume), and by extension the $L^2$-Alexander invariant, both detect the figure-eight knot:
\begin{theo41}
The $L^2$-Alexander invariant detects the figure-eight knot $4_1$.
\end{theo41}
The proof of Theorem \ref{thm 41} can be decomposed into two parts. We provide some of the details because the techniques involved are also used in the proof of the Main Theorem.

First, we use the fact that for a fibered knot, 
the extremes values of the $L^2$-Alexander invariant are monomial, with degrees of span equal to twice the genus. This was proven by J. Dubois, S. Friedl and W. Lück in \cite{DFL}, and we present in Theorem \ref{thm L2 alex fibered} a new proof of this result relying directly on Fox calculus. We also find a new bound on the monomiality limit depending on operator norms of the Fox jacobian associated to the monodromy of the knot.

The second part of the proof relies on properties of hyperbolic volumes of link exteriors proven by Cao-Meyerhoff, Adams and Agol in \cite{adams1988volumes,agol2010minimal,cao} that guarantee that a knot with simplicial volume $2.029..$ is fibered and either $4_1$ or obtained from $4_1$ from operations that strictly increase the genus, which is not possible since the genus one is detected by the $L^2$-Alexander invariant thanks to the first part.

\bigskip

One can ask if only fibered knots can be detected by the $L^2$-Alexander invariant. We answer by the negative with the following theorem:
\begin{theo52}
The $L^2$-Alexander invariant detects the (non-fibered) knot $5_2$ up to mirror image.
\end{theo52}
This time we must use the more general fact that the genus is always detected even if the knot is not fibered, proven by  Friedl-Lück and Liu in \cite{FL, Liu}, as well as properties of small hyperbolic volumes proven by Gabai-Meyerhoff-Milley in \cite{GMM}. Like for the figure-eight knot, the pair (genus, simplicial volume) is then sufficient to identify the knot $5_2$ up to mirror image.

\bigskip

In all three theorems, the fact that Seifert pieces inside knot exteriors correspond exactly to connected sums and cablings (see \cite{Bud}) is crucial to the proofs.
Since the simplicial volume does not detect any Seifert pieces in the knot exterior, it can only hope to identify a knot up to connected sums with iterated torus knots and cablings.
This is why it is useful to pair the volume with an invariant that changes under connected sum and cabling, and the genus fits the bill perfectly.
To distinguish $J_n$ from $K_n$ in the Main Theorem, the genus and the volume are no longer sufficient and we must use the monomiality limit $\lambda$ defined in Theorem \ref{thm L2 alex fibered}, which changes under cablings but not under connected sums with iterated torus knots. 

With these results we illustrate 
the strength of the $L^2$-Alexander invariant as a knot invariant.
The value in $t=1$ gives the volume, the degrees in $0^+$ and $+\infty$ yield the genus,  and one can hope to discover much more relevant information at other points. 
Both the exact value of the monomiality limit $\lambda$ and of the leading coefficient $C$ (defined in Theorem \ref{thm genus}) are some leads worthy of further pursuit.

\bigskip

The article is organized as follows:
Section \ref{sec:def} reviews some well-known facts about the $L^2$-Alexander invariant for knots, and can be skimmed by the experienced reader. Section \ref{sec:fib} deals with the behaviour of the $L^2$-Alexander invariant for fibered knots and the detection of the figure-eight knot, and comes from the author's PhD thesis \cite{BAthesis}.
 Section \ref{sec:52} details the detection of the twist knot $5_2$. Section \ref{sec:inf} presents an infinite family of knots detected by the $L^2$-Alexander invariant but not by the pair (genus, volume).

\section*{Acknowledgements}

I would like to thank my PhD advisor J\'er\^ome Dubois, for his teachings and great advice, as well as Rinat Kashaev and Emmanuel Wagner for instructive discussions.

This article is based on work supported by the \textit{Minist\`ere de l'Enseignement Sup\'erieur et de la Recherche} at the Universit\'e Paris Diderot
and by the \textit{Swiss National Science Foundation}, subsidy $200021\_ 162431$, at the Universit\'e de Gen\`eve.

Parts of this article stemmed from ideas inspired by the conference "Manifolds and groups" that took place in Ventotene (LT), Italy.

\section{Preliminaries and previous results}\label{sec:def}

\subsection{Fox calculus}

Here we follow mostly \cite[Chapter 9]{BZ}.

Let $ P = \big\langle g_1, \ldots, g_k \, \big| \, r_1, \ldots r_{l} \big\rangle $ be a presentation of a finitely presented group $G$.
If $w$ is an element of the free group 
$\mathbb{F} [ g_1 , \ldots , g_k ]$
on the generators $g_i$, we let $\overline{w}$ denote the element of $G$ that is the image of $w$ by the composition of the quotient homomorphism (quotient by the normal subgroup $\langle r_j \rangle$ generated by $r_1, \ldots, r_l$) and the implicit group isomorphism between this quotient $Gr(P)$ and $G$. 
To simplify the notations in the sequel, we will often write an element of $G$ $a$ instead of $\overline{a}$ when there is no ambiguity.
We write the corresponding ring morphisms similarly: if $w \in  \C \left [ \mathbb{F} [ g_1 , \ldots , g_k ] \right ]$ then its quotient image is written $\overline{w} \in \C[G] $.

The \textit{Fox derivatives associated to the presentation $P$} are the linear maps \\
$ {\dfrac{\partial}{\partial g_i}\colon \C \left [ \mathbb{F} [ g_1 , \ldots , g_k ] \right ]  \longrightarrow \C \left [ \mathbb{F} [ g_1 , \ldots , g_k ] \right ]}  $ for $i= 1 , \ldots , k$, defined by induction in the following way:

$ \dfrac{\partial}{\partial g_i} (1) = 0 , 
\ \dfrac{\partial}{\partial g_i} (g_j) = \delta_{i,j} , 
\ \dfrac{\partial}{\partial g_i} (g_j^{-1}) = - \delta_{i,j} g_j^{-1}$ (where $\delta_{i,j}$ is $1$ when $i=j$ and $0$ when $i \neq j$)  and 
for all  $u , v \in \mathbb{F} [ g_1 , \ldots , g_n ] , \ \dfrac{\partial}{\partial g_i} (u v) = \dfrac{\partial}{\partial g_i} (u) + u \dfrac{\partial}{\partial g_i} (v)$.

We call 
$F_P = \left ( \overline{ \left ( \dfrac{\partial r_j}{\partial g_i} \right )} \right )_{1 \leqslant i \leqslant k , 1 \leqslant j \leqslant l } \in M_{k,l}(\C \left [ G \right ] )$  the \textit{Fox matrix} of the presentation $P$. 

Let us assume $l =k-1$, i.e. \textbf{$P$ has deficiency one}.
For $i = 1 , \ldots  , k$, 
$F_{P,i} \in M_{k-1,k-1}(\C \left [ G \right ] )$ is defined as the matrix obtained from $F_P$ by deleting its $i$-th row.
%

\subsection{$L^2$-invariants}

Here we follow mostly \cite{Luc02}.

Let $G$ be a countable discrete group (a knot group, for example). In the following, every algebra will be a $\C$-algebra.

Consider the vector space $ \mathbb{C} [G] = \bigoplus_{g \in G} \mathbb{C} g $ (which is also an algebra) and its scalar product:
$$ \left \langle \sum_{g \in G} \lambda_g g , \sum_{g \in G} \mu_g g \right \rangle:= \sum_{g \in G} \lambda_g \overline{\mu_g}. $$

The completion of $ \mathbb{C} [G]$ is $ \ell^2(G):= \left \{ \sum_{g \in G} \lambda_g g \ | \  \lambda_g \in \mathbb{C} , \sum_{g \in G} | \lambda_g |^2 < \infty \right \}$, the Hilbert space of square-summable complex functions on the group $G$.

We denote
$\mathcal{B}( \ell^2(G))$ the algebra of operators on $\ell^2(G)$ that are continuous (or equivalently, bounded) for the operator norm.

To any $h \in G$ we associate a \textit{left-multiplication} 
$L_h\colon \ell^2(G) \rightarrow \ell^2(G)$ defined by
$$ L_h \left ( \sum_{g \in G} \lambda_g g \right ) = \sum_{g \in G} \lambda_g (hg) = \sum_{g \in G} \lambda_{h^{-1}g} g$$
and a \textit{right-multiplication} $R_h\colon \ell^2(G) \rightarrow \ell^2(G)$ defined by
$$ R_h \left ( \sum_{g \in G} \lambda_g g \right ) = \sum_{g \in G} \lambda_g (gh) = \sum_{g \in G} \lambda_{gh^{-1}} g.$$
Both $L_h$ and $R_h$ are isometries, and therefore belong to $\mathcal{B}( \ell^2(G))$.

We will use the same notation for right-multiplications by elements of the complex group algebra $\C [G]$:
$$ R_{ \sum_{i=1}^{k} \lambda_i g_i }:= \sum_{i=1}^{k} \lambda_i R_{g_i} \in \mathcal{B}( \ell^2(G)).$$

We will also use this notation to define a right-multiplication by a matrix $A$ with coefficients in $\C [G]$, $p$ rows and $q$ columns, in the following way:

If $A = \left ( a_{i,j} \right )_{1 \leqslant i \leqslant p, 1 \leqslant j \leqslant q} \in M_{p,q}(\C[G])$, then 
$$R_A:= \left (R_{a_{i,j}}\right )_{1 \leqslant i \leqslant p, 1 \leqslant j \leqslant q} 
\in \mathcal{B}( \ell^2(G)^{\oplus q}; \ell^2(G)^{\oplus p} ).$$

We write $\mathcal{N}(G)$ the algebraic commutant of $ \{ L_g ; g \in G \} $ in $\mathcal{B}( \ell^2(G))$. It will be called the \textit{von Neumann algebra of the group $G$}.

Let us remark that $R_g \in \mathcal{N}(G)$ for all $g$ in $G$.

The \textit{trace} of an element $\phi$ of $\mathcal{N}(G)$ is defined as  $$\mathrm{tr}_{\mathcal{N}(G)}(\phi):= \left \langle \phi (e) , e \right \rangle$$ where $ e $ is the neutral element of $G$.
This induces a trace on the $M_{n,n}(\mathcal{N}(G))$ for $n \geq 1$ by summing up the traces of the diagonal elements. We will write this new trace $\mathrm{tr}_{\mathcal{N}(G)}$ as well.

\begin{remark}\label{rem tr Rg}
If $g\in G$, $g\neq e$, then $\mathrm{tr}_{\mathcal{N}(G)}(R_g) = 0$. If $g=e$, then $R_g = Id_{\ell^2(G)}$ and $\mathrm{tr}_{\mathcal{N}(G)}(Id)=1$.
\end{remark}

\subsection{The Fuglede-Kadison determinant}

Let $G$ be a finitely presented group, $A \in M_{n,n}(\Z G)$ and $R_A\colon \ell^2(G)^n \to \ell^2(G)^n$ the associated operator. 
If $R_A$ is injective, define its \textit{Fuglede-Kadison determinant} as
\begin{equation*}\label{detFK}
{\det}_{\mathcal{N}(G)}(f):= 
\lim_{\epsilon \to 0^+} 
\exp  \left ( \frac{1}{2}
\mathrm{tr}_{\mathcal{N}(G)} \left (
\ln (R_{A}^* R_{A} + \epsilon Id_{\ell^2(G)^n} )
\right )
\right ) \in [0, + \infty [.
\end{equation*}

Here are several properties of the determinant we will use in the rest of this paper (see \cite{Luc02} for more details and proofs).

\begin{prop} \label{prop operations det}
(1) For every nonzero complex number $\lambda$, $\mathrm{det}_{\mathcal{N}(G)} (\lambda Id_U) = | \lambda |$.

(2) For all $f,g$
of the form $R_A, R_B$, with $A \in M_{n,n}(\Z G)$, $B \in M_{k,k}(\Z G)$, $$
\mathrm{det}_{\mathcal{N}(G)} \left (
\begin{pmatrix}
f & 0 \\ 0 & g
\end{pmatrix}
\right ) = \mathrm{det}_{\mathcal{N}(G)}(f) \cdot \mathrm{det}_{\mathcal{N}(G)}(g).$$

(3) Let $f=R_A$ and $g=R_B$ with $A,B \in M_{n,n}(\Z G)$. Assume that $f$ has dense image and $g$ is injective. Then
$$ \mathrm{det}_{\mathcal{N}(G)} (g  \circ f ) =
\mathrm{det}_{\mathcal{N}(G)}(g) \cdot  \mathrm{det}_{\mathcal{N}(G)}(f).$$
\end{prop}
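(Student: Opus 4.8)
The plan is to read all three identities directly off the defining formula for $\mathrm{det}_{\mathcal{N}(G)}$, reducing everything to the behaviour of the trace $\mathrm{tr}_{\mathcal{N}(G)}$ under functional calculus. The first two parts are immediate warm-ups and the whole difficulty is concentrated in the multiplicativity statement (3). For (1) I would note that $f = \lambda\,Id_U$ satisfies $f^*f = |\lambda|^2 Id_U$, so by functional calculus $\ln(f^*f + \epsilon\,Id_U) = \ln(|\lambda|^2 + \epsilon)\,Id_U$; taking the trace, which by Remark \ref{rem tr Rg} sends $Id_{\ell^2(G)}$ to $1$, and letting $\epsilon \to 0^+$ gives $\exp(\tfrac12\ln|\lambda|^2) = |\lambda|$. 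For (2) the key remark is that the block operator is compatible with the block decomposition, namely $\begin{pmatrix} f & 0 \\ 0 & g\end{pmatrix}^*\begin{pmatrix} f & 0 \\ 0 & g\end{pmatrix} = \begin{pmatrix} f^*f & 0 \\ 0 & g^*g\end{pmatrix}$; the logarithm preserves the block-diagonal form and $\mathrm{tr}_{\mathcal{N}(G)}$ is additive over diagonal blocks, so the trace splits as a sum, the exponential turns it into a product, and passing to the limit in $\epsilon$ (the two limits existing separately) yields the factorisation.

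For (3), I would first observe that since $A,B \in M_{n,n}(\Z G)$ the maps $f,g$ act on the same module $\ell^2(G)^n$, and because the von Neumann dimension is faithful, ``dense image'' and ``injective'' both amount to $\dim_{\mathcal{N}(G)}\ker = 0$; thus $f$, $g$ and $gf$ are all weak isomorphisms and all three determinants are defined. I would then dispatch the genuinely invertible case first, where the spectra of $|f|$ and $|g|$ are bounded away from $0$. Using the polar decomposition $g = v|g|$ together with the unitary invariance $\mathrm{det}_{\mathcal{N}(G)}(vh)=\mathrm{det}_{\mathcal{N}(G)}(h)$, which follows at once from $(vh)^*(vh)=h^*h$, it suffices to prove $\mathrm{det}_{\mathcal{N}(G)}(hf)=\mathrm{det}_{\mathcal{N}(G)}(h)\,\mathrm{det}_{\mathcal{N}(G)}(f)$ for $h:=|g|$ positive invertible and $f$ invertible, since $\mathrm{det}_{\mathcal{N}(G)}(|g|)=\mathrm{det}_{\mathcal{N}(G)}(g)$ by the same invariance.

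To handle this reduced case I would run the Fuglede--Kadison homotopy: set $A_t := f^* h^{2t} f$, so that $A_0 = f^*f$ and $A_1 = (hf)^*(hf)$, and differentiate using $\tfrac{d}{dt}\,\mathrm{tr}_{\mathcal{N}(G)}(\ln A_t)=\mathrm{tr}_{\mathcal{N}(G)}(A_t^{-1}\dot A_t)$. Since all functions of $h$ commute, $A_t^{-1}\dot A_t = 2\,f^{-1}(\ln h)f$, and cyclicity of the trace gives the constant value $2\,\mathrm{tr}_{\mathcal{N}(G)}(\ln h)$; hence $\tfrac{d}{dt}\big[\tfrac12\mathrm{tr}_{\mathcal{N}(G)}(\ln A_t)\big]=\mathrm{tr}_{\mathcal{N}(G)}(\ln h)$ is constant, and integrating from $t=0$ to $t=1$ produces exactly $\ln\mathrm{det}_{\mathcal{N}(G)}(hf)-\ln\mathrm{det}_{\mathcal{N}(G)}(f)=\mathrm{tr}_{\mathcal{N}(G)}(\ln h)=\ln\mathrm{det}_{\mathcal{N}(G)}(h)$.

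The main obstacle will be removing the invertibility hypothesis, i.e. treating weak isomorphisms whose spectral density functions $F_{|f|},F_{|g|}$ may accumulate mass near $0$. Here I would switch to the spectral-measure description $\ln\mathrm{det}_{\mathcal{N}(G)}(f)=\int_{0^+}^{\infty}\ln(\lambda)\,dF_{|f|}(\lambda)$, approximate $f$ and $g$ by the invertible operators arising from the $\epsilon$-regularisation $(\,\cdot\,)^*(\,\cdot\,)+\epsilon\,Id$ already built into the definition, apply the invertible case, and let $\epsilon\to 0^+$. The delicate point, and the reason the hypotheses cannot be dropped, is that this limit must be controlled near the bottom of the spectrum: injectivity of $g$ together with density of the image of $f$ force $\dim_{\mathcal{N}(G)}\ker(gf)=0$, so no mass of the spectral measure escapes to $\lambda=0$ and the determinant-class integrals converge to the intended limits. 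It is monotone/dominated convergence for these spectral integrals, rather than any further algebraic identity, that makes the passage to the limit work, and this is the step I expect to require the most care; it is precisely the content established in \cite{Luc02}.
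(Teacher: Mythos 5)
You should note first that the paper does not actually prove this proposition: it is stated with a pointer to \cite{Luc02} (``see \cite{Luc02} for more details and proofs''), so there is no argument in the text to compare yours against, and you have attempted strictly more than the author does. Your parts (1) and (2) are correct as written (granting, in (1), that $U=\ell^2(G)$; with the paper's unnormalized trace on $M_{n,n}(\mathcal{N}(G))$ one would get $|\lambda|^n$ for $U=\ell^2(G)^n$). Your treatment of the invertible case of (3) is also correct: the reduction by polar decomposition (where $v$ is genuinely unitary because, for square matrices over $\Z G$, injectivity is equivalent to dense image, cf.\ \cite[Lemma 1.13]{Luc02} as recalled in the paper), followed by the homotopy $A_t=f^*h^{2t}f$, differentiation under the trace, and cyclicity, is the classical Fuglede--Kadison argument; it is in fact the same differentiation device (\cite[Theorem 1.10(e)]{CFM}) that the paper itself uses in the proof of Lemma \ref{prop id - tA}.

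The genuine gap is the final step, and you have located it but not closed it. Passing from invertible operators to operators that are merely injective or of dense image is not a routine regularize-and-take-limits argument: the Fuglede--Kadison determinant is only upper semicontinuous under such limits; the regularization $(gf)^*(gf)+\epsilon\, Id$ does not factor through $f^*f+\epsilon\, Id$ and $g^*g+\epsilon\, Id$, so the invertible case does not directly apply to the objects whose limit you need; and $\dim_{\mathcal{N}(G)}\ker(gf)=0$ by itself gives no control of the spectral density function near $0$ --- injective operators can perfectly well have determinant $0$, so ``no mass of the spectral measure escapes to $\lambda=0$'' is precisely what must be proved, not a consequence of injectivity. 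The monotone-convergence mechanism you invoke does exist, but it rests on specific monotonicity and comparison estimates for spectral density functions (this is the content of \cite[Theorem 3.14(1)]{Luc02} and the lemmas feeding into it), not on generic measure theory. Since you ultimately cite \cite{Luc02} for exactly this step, your proposal is correct relative to that reference --- which is all the paper itself does --- but as a standalone proof of (3) the analytic core is still imported rather than established.
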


\begin{remark}
If  $f=R_A$ with $A \in M_{n,n}(\Z G)$, then (see \cite[Lemma 1.13]{Luc02}) $f$ is injective if and only if $f$ has dense image.

Therefore, when dealing with \guillemotleft square\guillemotright \ operators,  the property \guillemotleft has dense image\guillemotright \ can be replaced by \guillemotleft is injective\guillemotright \ in the assumptions of Proposition \ref{prop operations det} (3).

\end{remark}
%
%
%
%

\subsection{The $L^2$-Alexander invariant}

Let $K \subset S^3$ be a knot (with orientation),
$\nu K$ an open tubular neighbourhood of $K$,
$M_K = S^3 \setminus \nu K$ the exterior of $K$,
 $G_K = \pi_1(M_K)$ the group of $K$, 
 $P = \big\langle g_1, \ldots, g_k \, \big| \, r_1, \ldots r_{k-1} \big\rangle$ a deficiency one presentation of $G_K$, and
 $\alpha_K\colon G_K \to \Z$ the abelianization, compatible with the orientation of $K$.

For $t>0$ we define the algebra homomorphism:

\begin{equation*}
    \psi_{K,t} \colon\left(
    \begin{aligned}
        \C[G_K] &\longrightarrow  \C[G_K] \\
        \sum_{g \in G_K} c_g \cdot g 
        &\longmapsto 
        \sum_{g \in G_K} c_g \cdot t^{\alpha_{K}(g)} \cdot g
    \end{aligned}
    \right)
\end{equation*}
and we also call $\psi_{K,t}$ its induction to any matrix ring with coefficients in $\C[G_K]$.
One can picture it as a way of  \guillemotleft  tensoring by the abelianization representation\guillemotright .

Consider the operator $R = R_{\psi_{K,t} (F_{P,1})}\colon \ell^2(G)^{k-1} \to \ell^2(G)^{k-1}$. It follows from \cite{FL,L}, that $R$ is always injective and of positive Fuglede-Kadison determinant.

For $f,g \in \mathcal{F}(\R_{>0},\R_{>0})$ two maps, we write $f \ \dot{=} \ g$ if $$\exists m \in \Z, \forall t >0, f(t) = g(t) t^m.$$
Then 
$$\left (t \mapsto
\dfrac{\mathrm{det}_{\mathcal{N}(G_K)}(R_{\psi_{K,t} (F_{P,1})})}
{\max(1,t)^{|\alpha_K(g_1)|-1}}
\right )$$
depends on the presentation $P$ only up to the equivalence relation $\dot{=}$ (see \cite{BA13,DW,LZ06a}). Let us write $\Delta_{K}^{(2)} \in \mathcal{F}(\R_{>0},\R_{>0}) / \dot{=}$ its equivalence class; it is an invariant of the knot $K$ called the \textit{$L^2$-Alexander invariant of $K$}.

We will sometimes abuse the notation by writing $\Delta_{K}^{(2)}(t) \ \dot{=} \ f(t)$: here \\
$\left (t \mapsto \Delta_{K}^{(2)}(t)\right )$ and $ (t \mapsto f(t))$ denote two particular representatives of $\Delta_{K}^{(2)}$ in $\mathcal{F}(\R_{>0},\R_{>0})$.

\begin{exemple} \label{ex L2 unknot}
Let us compute the invariant for the unknot $U$.

\begin{figure}[!h]
\centering
\begin{tikzpicture} [every path/.style={string } , every node/.style={transform shape , knot crossing , inner sep=1.5 pt } ] 
\begin{scope}[xshift=-1cm,scale=1]
	\node[rotate=0] (l) at (-1,0) {};
	\node[rotate=0] (r) at (1, 0 ) { } ;
	
\draw (l.center) .. controls (l.16 north east) and (r.16 north west) .. (r) ;
\draw (r) .. controls (r.32 south east) and (r.32 north east) .. (r.center) ;
\draw (r.center) .. controls (r.16 south west) and (l.16 south east) .. (l) ;
\draw (l) .. controls (l.32 north west) and (l.32 south west) .. (l.center) ;
\end{scope}
\end{tikzpicture}
\caption{A diagram for the unknot} \label{figure doubly twisted}
\end{figure}
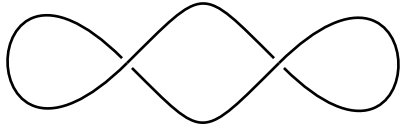

The  \guillemotleft  doubly twisted rubber band\guillemotright \ knot diagram of Figure \ref{figure doubly twisted} gives the Wirtinger presentation $P = \langle g, h | g h^{-1} \rangle$ of the unknot group $G_U$ (which is isomorphic to $\Z$), and the associated Fox matrix is $F_{P} = \begin{pmatrix} 1 \\ -1 \end{pmatrix}$.

Therefore for all $t$ in $\C^*$ , $R_{\psi_{U,t}(F_{P,1})} = -Id\colon \ell^2(G_U) \to \ell^2(G_U)$ has Property $\mathcal{I}$ and $\Delta_{U,P}^{(2)}(t)=1$.
Thus, the invariant for the trivial knot is the constant map equal to $1$.
\end{exemple}

\subsection{Mirror image and reversion}

\

We know how the $L^2$-Alexander invariant behaves under reversion and mirror image, by looking at specific group presentations of the knots in consideration. 

\begin{defi}
Let $K$ be a knot in $S^3$. The knot $K$ comes with a chosen orientation. Its \textit{inverse $-K$} is $K$ with the opposite orientation.
\end{defi}

\begin{prop}\cite[Corollary 4.4]{BA13}
Let $K$ be a knot in $S^3$, and $-K$ its inverse knot. Then
$\Delta^{(2)}_{-K}(t) \ \dot{=} \ \Delta^{(2)}_{K}(1/t)$.
\end{prop}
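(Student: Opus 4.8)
The plan is to exploit the fact that reversing the orientation of a knot changes neither its exterior nor its group, but flips the sign of the distinguished abelianization. Concretely, the inverse knot $-K$ has the same exterior $M_{-K} = M_K$ and the same group $G_{-K} = G_K$, so I would use one and the same deficiency one presentation $P = \langle g_1, \ldots, g_k \mid r_1, \ldots, r_{k-1}\rangle$ for both knots, and hence the same Fox matrix $F_{P,1}$. The only ingredient in the definition of $\Delta^{(2)}$ that depends on the orientation is the abelianization $\alpha$: since the meridian of $-K$ compatible with the orientation is the reverse of the one for $K$, I claim $\alpha_{-K} = -\alpha_K$ as homomorphisms $G_K \to \Z$. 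Verifying this sign change carefully—tracing how the orientation-compatibility normalization of $\alpha_K$ transforms under reversal of $K$—is the one genuinely geometric point, and I expect it to be the main (indeed only) subtlety; everything afterwards is substitution and bookkeeping.

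Granting the sign flip, the key algebraic identity is $\psi_{-K,t} = \psi_{K,1/t}$. For $g \in G_K$ one computes $\psi_{-K,t}(g) = t^{\alpha_{-K}(g)} g = t^{-\alpha_K(g)} g = (1/t)^{\alpha_K(g)} g = \psi_{K,1/t}(g)$, and this passes to the induced map on matrix rings. Consequently the two operators coincide, $R_{\psi_{-K,t}(F_{P,1})} = R_{\psi_{K,1/t}(F_{P,1})}$, and hence so do their Fuglede--Kadison determinants:
$$\mathrm{det}_{\mathcal{N}(G_K)}\big(R_{\psi_{-K,t}(F_{P,1})}\big) = \mathrm{det}_{\mathcal{N}(G_K)}\big(R_{\psi_{K,1/t}(F_{P,1})}\big).$$

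It then remains to compare the normalization factors $\max(1,t)^{|\alpha(g_1)|-1}$. Since $|\alpha_{-K}(g_1)| = |{-\alpha_K(g_1)}| = |\alpha_K(g_1)|$, the two exponents agree; write $N := |\alpha_K(g_1)| - 1$. Using the elementary identity $\max(1,1/t) = \max(1,t)/t$, valid for all $t>0$, I obtain
$$\frac{\Delta^{(2)}_{-K}(t)}{\Delta^{(2)}_{K}(1/t)} \ \dot{=} \ \left(\frac{\max(1,1/t)}{\max(1,t)}\right)^{N} = t^{-N},$$
which is a monomial in $t$. Since the invariant is defined only up to multiplication by such monomials (the relation $\dot{=}$), this yields $\Delta^{(2)}_{-K}(t) \ \dot{=} \ \Delta^{(2)}_{K}(1/t)$, as desired.
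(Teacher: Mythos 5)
Your proof is correct, but note that the paper never proves this statement itself: it is quoted from \cite[Corollary 4.4]{BA13}, and the paper's only comment on the method is that reversion and mirror image are handled ``by looking at specific group presentations of the knots in consideration''. In \cite{BA13} the reversion formula sits immediately after the cabling formula (Theorem 4.3 there, reproduced here as Proposition \ref{prop cable}) and follows from it in one line: the $(-1,q)$-cable of $K$ is $-K$ itself, so taking $p=-1$ gives $\Delta^{(2)}_{-K}(t) \ \dot{=} \ \Delta^{(2)}_{K}(t^{-1})\max(1,t)^{(|-1|-1)(|q|-1)} = \Delta^{(2)}_{K}(1/t)$. Your route is genuinely different and more elementary: you work straight from the definition, using that $M_{-K}=M_K$ and $G_{-K}=G_K$ (so one presentation and one Fox matrix serve both knots) and that the only orientation-dependent datum is the abelianization, which flips sign because reversing $K$ reverses which oriented meridian has linking number $+1$; then $\psi_{-K,t}=\psi_{K,1/t}$ forces the Fuglede--Kadison determinants to coincide, and the normalizations differ by $\bigl(\max(1,1/t)/\max(1,t)\bigr)^{N}=t^{-N}$, a monomial absorbed by $\dot{=}$. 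What each approach buys: yours is self-contained, uses only the presentation-independence already built into the definition of $\Delta^{(2)}_{K}$, and isolates exactly where the orientation enters; the cited route is instantaneous but rests on the much heavier cabling theorem (which this paper needs anyway). Two small points you should make explicit in a write-up: first, that $\dot{=}$ is preserved under precomposition with $t\mapsto 1/t$ (if $f(t)=g(t)t^{m}$ for all $t>0$ then $f(1/t)=g(1/t)t^{-m}$), so the class of $t\mapsto\Delta^{(2)}_{K}(1/t)$ is well defined and comparing single representatives suffices; second, the justification of $\alpha_{-K}=-\alpha_{K}$, which you correctly flag as the one geometric step, amounts to noting that $H_{1}(M_K;\Z)\cong\Z$ admits exactly two surjections onto $\Z$ and the orientation-compatibility condition selects opposite ones for $K$ and $-K$.
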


\begin{defi}
Let $K$ be a knot in $S^3$. The \textit{mirror image of $K$} is the image of $K$ by any planar reflection in $\R^3$. It is written $K^*$. The link $K^*$ does not depend on the plane of reflection up to isotopy.
\end{defi}

\begin{prop}\cite[Proposition 2.26]{BA13}
Let $K$ be a knot in $S^3$, and $K^*$ its mirror image. Then
$\Delta^{(2)}_{K^*}(t) \ \dot{=} \ \Delta^{(2)}_{K}(1/t)$.
\end{prop}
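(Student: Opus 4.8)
The plan is to compute both invariants from compatible Wirtinger presentations and compare them through Fox calculus. Fix a diagram $D$ of $K$ with Wirtinger presentation $P = \langle g_1,\dots,g_k \mid r_1,\dots,r_{k-1}\rangle$. Reflecting $D$ produces a diagram $D^*$ of $K^*$ in which every crossing has switched sign; a direct check on a single crossing shows that switching a crossing replaces its Wirtinger relator $r_j$ by a relator equivalent to $\Phi(r_j)$, where $\Phi$ denotes the automorphism of the free group $\mathbb{F}[g_1,\dots,g_k]$ determined by $g_i \mapsto g_i^{-1}$. Hence $P^* = \langle g_1,\dots,g_k \mid \Phi(r_1),\dots,\Phi(r_{k-1})\rangle$ is a deficiency one presentation of $G_{K^*}$, and since $\Phi$ maps the kernel of $\mathbb{F} \to G_K$ onto the kernel of $\mathbb{F} \to G_{K^*}$, it descends to a group isomorphism $\bar\Phi \colon G_K \to G_{K^*}$ with $\bar\Phi(g_i) = g_i^{-1}$. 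As each $g_i$ is a meridian, this forces $\alpha_{K^*}\circ\bar\Phi = -\alpha_K$, and therefore $\psi_{K^*,t}\circ\bar\Phi = \bar\Phi\circ\psi_{K,1/t}$ on $\C[G_K]$.

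First I would relate the two Fox matrices. The Fox chain rule applied to $\Phi$, together with $\frac{\partial g_i^{-1}}{\partial g_l} = -\delta_{i,l}g_l^{-1}$, gives $\frac{\partial \Phi(r_j)}{\partial g_l} = -\Phi\!\left(\frac{\partial r_j}{\partial g_l}\right)g_l^{-1}$. Passing to the group rings and rewriting everything through $\bar\Phi$ via the intertwining relation above, each entry of $\psi_{K^*,t}(F_{P^*})$ equals $-t^{-1}\,\bar\Phi\!\left(\psi_{K,1/t}\!\left(\overline{\partial r_j/\partial g_i}\right)\right)\gamma_i$, where $\gamma_i = \bar\Phi(\overline{g_i})$ is a single group element of $G_{K^*}$. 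Deleting the first row consistently for both knots (which is legitimate since the invariant is presentation independent up to $\dot{=}$), I obtain a factorisation of operators $R_{\psi_{K^*,t}(F_{P^*,1})} = \mathcal{D}\circ R_{\bar\Phi(\psi_{K,1/t}(F_{P,1}))}$, where $\mathcal{D}$ is block diagonal with the $k-1$ blocks $-t^{-1}R_{\gamma_i}$. I expect the crux of the argument, and the step most prone to sign errors, to lie exactly here: the correction $\gamma_i$ multiplies on the \emph{right} and depends on the \emph{row} index $i$, so it does not assemble into an ordinary matrix factor but precisely into a block-diagonal post-composition $\mathcal{D}$.

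Then I would take Fuglede--Kadison determinants. Each $\gamma_i$ is a group element, so $R_{\gamma_i}$ is an isometry and $\det_{\mathcal{N}(G_{K^*})}(-t^{-1}R_{\gamma_i}) = t^{-1}$ by Proposition \ref{prop operations det}(1); the block-diagonal property (2) then gives $\det_{\mathcal{N}(G_{K^*})}(\mathcal{D}) = t^{-(k-1)}$. The operator $R_{\psi_{K,1/t}(F_{P,1})}$ is injective, hence has dense image, by the results cited in the construction of the invariant, so multiplicativity (3) applies; and since $\bar\Phi$ is a group isomorphism it induces a trace-preserving isomorphism $\mathcal{N}(G_K)\cong\mathcal{N}(G_{K^*})$ leaving Fuglede--Kadison determinants unchanged. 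Combining these,
\[
\det_{\mathcal{N}(G_{K^*})}\!\left(R_{\psi_{K^*,t}(F_{P^*,1})}\right) = t^{-(k-1)}\,\det_{\mathcal{N}(G_K)}\!\left(R_{\psi_{K,1/t}(F_{P,1})}\right).
\]

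Finally, since $\bar\Phi(g_i) = g_i^{-1}$ gives $|\alpha_{K^*}(g_1)| = |\alpha_K(g_1)|$, the two normalising powers of $\max(1,\cdot)$ coincide after the substitution $t \mapsto 1/t$. Using the elementary identity $\max(1,1/t)/\max(1,t) = 1/t$ valid for all $t>0$, dividing the chosen representative of $\Delta_{K^*}^{(2)}(t)$ by that of $\Delta_K^{(2)}(1/t)$ leaves only the monomial $t^{-(k-1)-(|\alpha_K(g_1)|-1)}$, whose exponent is an integer. As $\dot{=}$ precisely forgets monomial factors, this yields $\Delta_{K^*}^{(2)}(t)\ \dot{=}\ \Delta_K^{(2)}(1/t)$, as claimed. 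Once the factorisation of the second paragraph is correctly set up, the determinant manipulations here are routine applications of Proposition \ref{prop operations det} and the invariance of the trace under $\bar\Phi$.
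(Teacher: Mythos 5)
Your proof is correct: the identification of the mirror relators with the $\Phi$-images ($\Phi\colon g_i \mapsto g_i^{-1}$), the Fox chain-rule identity $\frac{\partial \Phi(r_j)}{\partial g_l} = -\Phi\bigl(\frac{\partial r_j}{\partial g_l}\bigr)g_l^{-1}$, the block-diagonal post-composition factorisation (correctly accounting for the contravariance $R_{ab}=R_b\circ R_a$ and the row-dependence of $\gamma_i$), and the determinant bookkeeping via Proposition \ref{prop operations det} and the trace-preserving isomorphism induced by $\bar\Phi$ all check out; note that for Wirtinger presentations $|\alpha_K(g_1)|=|\alpha_{K^*}(g_1)|=1$, so the normalising factors are in fact trivial and your last step simplifies. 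The paper itself gives no proof of this statement, citing \cite{BA13} instead, and your Wirtinger-presentation-plus-Fox-calculus route is essentially the argument of that cited source.
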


It was proven in \cite{DFLdual} that the $L^2$-Alexander invariant has a similar symmetry property as the classical Alexander polynomial: up to equivalence, composing the $L^2$-Alexander invariant with $(t \mapsto 1/t)$ does not change it. More precisely:

\begin{prop}\cite[Theorem 1.2]{DFLdual} \label{prop dual}
Let $K$ be a knot in $S^3$ and $F$ a representative of the $L^2$-Alexander invariant $\Delta^{(2)}_{K}$. 
Then $F \ \dot{=} \ (t \mapsto F(1/t))$.
\end{prop}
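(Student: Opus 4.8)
The plan is to exploit the interplay between the substitution $t \mapsto 1/t$ and the canonical involution on the group ring, combined with the invariance of the Fuglede--Kadison determinant under taking adjoints; the substantive topological input will be Poincaré--Lefschetz duality for the knot exterior. Let me first record the purely algebraic mechanism. I introduce the $*$-operation on $\C[G_K]$ given by $(\sum_g c_g g)^* = \sum_g \overline{c_g}\, g^{-1}$, extended to matrices by conjugate transposition, $A^* = (a_{j,i}^*)_{i,j}$. Since each $R_g$ is unitary with $R_g^* = R_{g^{-1}}$, one checks at once that $R_{A^*} = (R_A)^*$ as operators on $\ell^2(G_K)^{\oplus n}$. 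The key elementary observation is that $\psi_{K,t}$ intertwines $*$ with the substitution $t \mapsto 1/t$: because $\alpha_K(g^{-1}) = -\alpha_K(g)$, a direct computation gives $\psi_{K,t}(a)^* = \psi_{K,1/t}(a^*)$ for every $a \in \C[G_K]$, hence $\psi_{K,t}(A)^* = \psi_{K,1/t}(A^*)$ for every matrix $A$.

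Next I feed this into the determinant. The Fuglede--Kadison determinant is insensitive to adjoints, $\det_{\mathcal{N}(G_K)}(\phi) = \det_{\mathcal{N}(G_K)}(\phi^*)$, as is clear from its defining formula since $\phi^*\phi$ and $\phi\phi^* = (\phi^*)^*(\phi^*)$ share the same spectral measure away from $0$ (the injectivity, i.e. Property $\mathcal{I}$, needed throughout being guaranteed by \cite{FL,L}). Applying this to $\phi = R_{\psi_{K,t}(F_{P,1})}$ and using the intertwining identity yields
$$\det_{\mathcal{N}(G_K)}\big(R_{\psi_{K,t}(F_{P,1})}\big) = \det_{\mathcal{N}(G_K)}\big(R_{\psi_{K,1/t}((F_{P,1})^*)}\big).$$
Thus a representative of $\Delta^{(2)}_K$ evaluated at $t$ is computed by the determinant, at parameter $1/t$, of the operator attached to the \emph{adjoint} Fox matrix $(F_{P,1})^*$. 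Note that the known reversion and mirror formulas relate $K$ to the possibly distinct knots $-K$ and $K^*$, so they cannot by themselves produce this self-duality; the adjoint Fox matrix is the crucial new ingredient.

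The heart of the proof is then to identify the right-hand side, up to $\dot{=}$, with $\det_{\mathcal{N}(G_K)}(R_{\psi_{K,1/t}(F_{P,1})})$, i.e. with $F(1/t)$. This is exactly where the topology enters: $(F_{P,1})^*$ is the boundary data of the $\psi_{K,1/t}$-twisted $\ell^2$-cochain complex of (the universal cover of) the knot exterior, while $F_{P,1}$ carries that of its chain complex. Since $M_K$ is a compact orientable $3$-manifold with toroidal boundary, Poincaré--Lefschetz duality furnishes a based chain homotopy equivalence between these twisted chain and cochain complexes. Because the $L^2$-torsion is multiplicative along such equivalences (Proposition \ref{prop operations det}), the two determinants agree up to the determinant of the toroidal boundary contribution and up to the degree shift coming from duality; both are monomial in $t$, and the normalisation factor $\max(1,t)^{|\alpha_K(g_1)|-1}$ likewise changes only by a monomial under $t \mapsto 1/t$. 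Absorbing all these monomial discrepancies into $\dot{=}$ gives $F \ \dot{=} \ (t \mapsto F(1/t))$.

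I expect this last step to be the main obstacle. Passing rigorously from ``$(F_{P,1})^*$ presents the dual complex'' to a genuine equality of Fuglede--Kadison determinants up to monomial requires setting up the twisted $\ell^2$-(co)chain complexes of $M_K$, checking that Poincaré--Lefschetz duality is realised on the nose by a based chain isomorphism (so that no spurious torsion is introduced), and verifying that the toroidal boundary and the degree shift contribute only monomials; the bookkeeping of the normalisation and of Property $\mathcal{I}$ is delicate but routine once the duality framework is in place. The virtue of the reduction above is that everything collapses to this single duality statement, with the rest handled by the elementary involution identity and the adjoint-invariance of the determinant.
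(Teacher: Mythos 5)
The paper never proves this proposition: it is imported verbatim from \cite[Theorem 1.2]{DFLdual}, so the only meaningful comparison is with the proof in that reference. Your opening algebraic moves are correct and do coincide with the strategy used there: the involution $a \mapsto a^*$ on $\C[G_K]$, the identity $(R_A)^* = R_{A^*}$, the intertwining relation $\psi_{K,t}(A)^* = \psi_{K,1/t}(A^*)$ (valid because $t>0$ is real and $\alpha_K(g^{-1})=-\alpha_K(g)$), and the adjoint-invariance of the Fuglede--Kadison determinant together give the equality $\det_{\mathcal{N}(G_K)}\bigl(R_{\psi_{K,t}(F_{P,1})}\bigr) = \det_{\mathcal{N}(G_K)}\bigl(R_{\psi_{K,1/t}((F_{P,1})^*)}\bigr)$, and you correctly observe that the mirror/reversion formulas alone cannot yield the self-duality.

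The gap is that the remaining step --- identifying $\det_{\mathcal{N}(G_K)}\bigl(R_{\psi_{K,1/t}((F_{P,1})^*)}\bigr)$ with $F(1/t)$ up to monomials --- is not deferrable bookkeeping; it is the entire content of the cited theorem, and as sketched it fails on three concrete points. First, $F_{P,1}$ is the boundary matrix of the $2$-complex of the presentation $P$, not of a CW-structure on the $3$-manifold $M_K$; before any appeal to Poincar\'e--Lefschetz duality one must identify the Fox-calculus invariant with the twisted $L^2$-torsion of $M_K$ itself, which is a theorem of \cite{DFL}, not a formality. Second, Proposition \ref{prop operations det} says nothing about chain homotopy equivalences: the Fuglede--Kadison torsion of a based complex is \emph{not} invariant under an arbitrary based chain homotopy equivalence --- it changes by the torsion of the mapping cone --- so one must prove that the duality equivalence is ``simple'' in this sense, which is one of the main technical achievements of \cite{DFLdual}. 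Third, duality for a manifold with boundary pairs $M_K$ with $(M_K,\partial M_K)$, so the boundary torus must be shown to contribute exactly a monomial $t^m$ (the relation $\dot{=}$ in this paper permits no constant factors). Until these are carried out, your argument is a correct reduction of the proposition to precisely the duality statement proved in \cite{DFLdual}, rather than a proof of it.
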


As a consequence of the three previous propositions, the $L^2$-Alexander invariant cannot distinguish a knot from its mirror image or its inverse any more than the classical Alexander polynomial could:

\begin{corollary}
Let $K$ be a knot in $S^3$. Then all knots in the set $\{ K; -K; K^*; -K^* \ \}$ have the same $L^2$-Alexander invariant.
\end{corollary}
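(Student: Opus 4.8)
The plan is to deduce the corollary directly from the three preceding propositions, using the fact that $\dot{=}$ is an equivalence relation together with the key self-symmetry supplied by Proposition \ref{prop dual}. First I would record that $\dot{=}$ is indeed an equivalence relation on $\mathcal{F}(\R_{>0},\R_{>0})$: reflexivity takes the exponent $m=0$, symmetry replaces $m$ by $-m$, and transitivity adds the two exponents. Consequently, two knots share the same $L^2$-Alexander invariant exactly when some pair (equivalently, any pair) of chosen representatives are related by $\dot{=}$, and I am free to chain such relations.

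Next I would fix a representative of $\Delta^{(2)}_K$ and run the following chains. Combining the inverse proposition $\Delta^{(2)}_{-K}(t) \ \dot{=} \ \Delta^{(2)}_{K}(1/t)$ with the duality $\Delta^{(2)}_{K}(1/t) \ \dot{=} \ \Delta^{(2)}_{K}(t)$ of Proposition \ref{prop dual}, transitivity gives $\Delta^{(2)}_{-K} = \Delta^{(2)}_{K}$ as classes. Running the identical argument with the mirror proposition in place of the inverse one yields $\Delta^{(2)}_{K^*} = \Delta^{(2)}_{K}$. For the remaining knot $-K^*$ I would apply the inverse proposition to $K^*$ (or the mirror proposition to $-K$) and then invoke Proposition \ref{prop dual} once more:
$$\Delta^{(2)}_{-K^*}(t) \ \dot{=} \ \Delta^{(2)}_{K^*}(1/t) \ \dot{=} \ \Delta^{(2)}_{K}(1/t) \ \dot{=} \ \Delta^{(2)}_{K}(t),$$
where the middle step uses the equality $\Delta^{(2)}_{K^*} = \Delta^{(2)}_{K}$ just established. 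This closes the loop and shows all four classes coincide.

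There is no genuine obstacle here: the entire content is packaged inside Proposition \ref{prop dual}, whose invariance under $t \mapsto 1/t$ is precisely what upgrades each of the three formulas of the form $\Delta^{(2)}_{\bullet}(t) \ \dot{=} \ \Delta^{(2)}_{K}(1/t)$ into an honest equality with $\Delta^{(2)}_K$ itself. The only step requiring care is bookkeeping, namely checking that each proposition is applied to the correct knot and that the equivalences are composed in a valid order, so that the chains above are legitimate uses of transitivity rather than circular reasoning.
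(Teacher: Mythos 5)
Your proof is correct and follows exactly the route the paper intends: the paper offers no written proof beyond the remark that the corollary is "a consequence of the three previous propositions," and your chains of $\dot{=}$-equivalences (inverse or mirror formula followed by the duality of Proposition \ref{prop dual}) are precisely that consequence spelled out. The only implicit ingredient worth noting is that $\dot{=}$ is preserved under pre-composition with $t \mapsto 1/t$ (used in your middle step for $-K^*$), which is immediate since $f(t)=g(t)t^m$ gives $f(1/t)=g(1/t)t^{-m}$.
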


As a consequence, we will say that the \textit{$L^2$-Alexander invariant detects a knot $K$} if the only knots with $L^2$-Alexander invariant $\Delta_K^{(2)}$ are  $K, -K, K^*,$ and $-K^*$. Note that for invertible amphicheiral knots, like $U$ and $4_1$, this is the same as the usual stronger meaning of ``detection".

\subsection{Genus, connected sum and cablings}

\

A \textit{Seifert surface} for a knot $K$ is a compact oriented surface $S$ embedded in $S^3$ whose boundary $\partial S$ is equal to $K$ as an oriented $1$-manifold.
The minimal genus $g$ of a Seifert surface spanning a knot $K$ is called \textit{the genus of the knot $K$} and is written $g(K)$.

Let $K$ and $J$ be knots in $S^3$; their \textit{connected sum} $K \sharp J$ (also denoted $S_{J}(K)$ or $S_{K}(J)$) is the knot obtained by removing one small arc on $K$ and on $J$ and joining the four vertices by two other arcs respecting the orientations and forming a single knot. Up to ambient isotopy, the knot $K \sharp J$ does not depend on the choice of the two small arcs.

\begin{prop}\cite[Theorem 5.14]{Rol}\label{prop genus sum}
 The genus of a connected sum $K\sharp J$ of two knots $K$ and $J$ satisfies:
$g(K \sharp J) = g(K) + g(J)$.
\end{prop}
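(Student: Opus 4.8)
The plan is to establish the two inequalities $g(K \sharp J) \leq g(K) + g(J)$ and $g(K \sharp J) \geq g(K) + g(J)$ separately. The first is a direct geometric construction, while the second requires a cut-and-paste argument along a decomposing sphere and is where the genuine work lies.

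For the upper bound, I would begin with minimal-genus Seifert surfaces $S_K$ and $S_J$ for $K$ and $J$, so that $g(S_K) = g(K)$ and $g(S_J) = g(J)$, each with a single boundary component. Since $K \sharp J$ is obtained by joining $K$ and $J$ along a band respecting the orientations, one can arrange this band so that it simultaneously joins $\partial S_K$ to $\partial S_J$, effecting a boundary connected sum of the two surfaces. The resulting oriented surface $S$ has a single boundary component equal to $K \sharp J$, and an Euler characteristic count (gluing along an arc, which has $\chi = 1$) gives $\chi(S) = \chi(S_K) + \chi(S_J) - 1$; reading this through $\chi = 1 - 2g$ for one-boundary surfaces yields $g(S) = g(K) + g(J)$. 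This exhibits a Seifert surface for $K \sharp J$ of the desired genus, so $g(K \sharp J) \leq g(K) + g(J)$.

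For the lower bound, let $S$ be a minimal-genus Seifert surface for $K \sharp J$, and let $\Sigma$ be the decomposing sphere of the connected sum: a $2$-sphere meeting $K \sharp J$ transversally in exactly two points and separating $S^3$ into two balls $B_K$ and $B_J$ containing the two summands (each closed up by a trivial arc). After isotoping $S$ into transverse position, the intersection $S \cap \Sigma$ consists of the two points $K \sharp J \cap \Sigma$ joined into a single arc, together with a finite collection of disjoint circles. The key step is to eliminate these circles: since $\Sigma$ is a sphere, an innermost such circle bounds a disk $D \subset \Sigma$ whose interior is disjoint from $S$, and I would surger $S$ along $D$. Because $S$ has minimal genus, this surgery cannot lower the genus, which forces each surgery to split off a sphere that may be discarded; iterating, one arrives at a surface, still of minimal genus, that meets $\Sigma$ only in the single arc.

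Finally, cutting $S$ along this arc (equivalently, intersecting with $B_K$ and $B_J$) produces Seifert surfaces $S_K'$ and $S_J'$ for $K$ and $J$ respectively, each with one boundary component, and the same Euler characteristic bookkeeping read in reverse gives $g(S) = g(S_K') + g(S_J') \geq g(K) + g(J)$. Combining with the upper bound yields the claimed equality. I expect the main obstacle to be the innermost-circle surgery argument: one must verify carefully that minimality of $S$ rules out any genuine genus drop and that every discarded component is a sphere, so that the number of intersection circles strictly decreases at each step while the genus is preserved.
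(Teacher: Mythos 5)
The paper offers no proof of this proposition at all: it is quoted as a known result, with a citation to Rolfsen's book, so there is no ``paper proof'' to compare against. Your argument is correct and is essentially the classical one behind that citation: the boundary connected sum of minimal Seifert surfaces gives $g(K\sharp J)\leq g(K)+g(J)$, and the innermost-circle compression along the decomposing sphere (where minimality forces every compression to split off a sphere of genus zero, so the circle count strictly drops while the genus is preserved) gives the reverse inequality. The one detail worth writing out in full is the case analysis in the surgery step: a non-separating compressing circle would lower the genus outright, contradicting minimality, while a separating one yields $g(S_1)+g(S_2)=g(S)$ with $S_2$ closed, so minimality applied to the component $S_1$ containing the boundary forces $g(S_2)=0$; together with the connectedness of $S\cap B_K$ and $S\cap B_J$ after the circles are removed, this closes every gap you flagged.
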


Let $K$ be a knot in $S^3$, $T_K$ an open tubular neighbourhood of $K$ (its core having the same orientation as $K$).
Let $p,q$ be relatively prime integers and let $V$ be a solid torus with a preferred meridian-longitude system. The \textit{$(p,q)$-torus knot $T_{p,q}$} is the knot drawn on $\partial V$ $q$ times in the meridional direction and $p$ times in the longitudinal direction.
Let $h\colon V \to T_K$ be an homeomorphism between the two solid tori preserving orientation and preferred meridian-longitude systems; $h(T_{p,q})$ is a knot in $S^3$ called the \textit{$(p,q)$-cable of $K$} and denoted $C_{p,q}(K)$.

\begin{prop}\cite{shibuya} \label{prop genus cable}
The genus of a cable $C_{p,q}(K)$ of a knot $K$ satisfies:\\
$g(C_{p,q}(K)) = |p| g(K) + (|p|-1)(|q|-1)/2$.
\end{prop}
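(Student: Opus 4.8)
The plan is to prove the two inequalities
$$g(C_{p,q}(K)) \le |p|\,g(K) + (|p|-1)(|q|-1)/2 \quad\text{and}\quad g(C_{p,q}(K)) \ge |p|\,g(K) + (|p|-1)(|q|-1)/2$$
separately. Since the genus is unchanged under mirror image and reversal of orientation, and the claimed value depends only on $|p|$ and $|q|$, I may assume $p,q>0$; the degenerate cases $K=U$ (where $C_{p,q}(U)=T_{p,q}$ and the formula reduces to the torus-knot genus) and $p=1$ (where $C_{1,q}(K)=K$ and the formula gives $g(K)$) serve as sanity checks.

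\emph{Upper bound (explicit Seifert surface).} First I would build a Seifert surface $\Sigma$ for $C=C_{p,q}(K)$ and compute its genus. Write $T_K=h(V)$ and $T=\partial T_K$, and let $S$ be a minimal genus Seifert surface for $K$, so $\chi(S)=1-2g(K)$. Inside $V$ I take a compact orientable surface $F_{\mathrm{in}}$ whose boundary is $T_{p,q}$ together with $p$ parallel longitudes of $\partial V$; this exists because $[T_{p,q}]=p[\lambda]$ in $H_1(V)$, so the difference is null-homologous. Capping the $p$ longitudes with meridian disks of the complementary unknotted solid torus turns $F_{\mathrm{in}}$ into the standard fiber surface of $T_{p,q}$, of genus $(p-1)(q-1)/2$, which gives $\chi(F_{\mathrm{in}})=1-(p-1)(q-1)-p$. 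Outside, I take $p$ parallel copies of $S$ in $M_K$, with boundary the $p$ longitudes and total Euler characteristic $p(1-2g(K))$. Gluing $h(F_{\mathrm{in}})$ to these copies along the $p$ longitudes (gluing along circles, which have Euler characteristic $0$, does not change $\chi$) produces a connected orientable surface $\Sigma$ with $\partial\Sigma=C$ and
$$\chi(\Sigma) = p(1-2g(K)) + 1 - (p-1)(q-1) - p = 1 - 2p\,g(K) - (p-1)(q-1),$$
hence $g(\Sigma)=(1-\chi(\Sigma))/2 = p\,g(K)+(p-1)(q-1)/2$, establishing the upper bound.

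\emph{Lower bound (incompressible torus decomposition).} Let $\Sigma$ now be a minimal genus Seifert surface for $C$. The cabling torus $T=\partial T_K$ is the JSJ torus of $M_C$ separating the companion exterior $M_K$ from the Seifert-fibered cable space, and is incompressible in $M_C$ since $K$ is nontrivial and the cable is essential. After a transverse isotopy I would minimize the number of components of $\Sigma\cap T$; innermost-disk and boundary-compression arguments, using incompressibility of both $T$ and $\Sigma$ together with the irreducibility of $S^3$, remove every intersection circle inessential on $T$ or on $\Sigma$. Thus $\Sigma\cap T$ is a nonempty family of mutually parallel essential curves on $T$. A homological argument pins down their slope and count: null-homology of $\partial\Sigma\cap M_K$ in $M_K=S^3\setminus\nu K$ (where only longitudinal curves bound) forces the slope to be $\lambda_K$, while null-homology inside $T_K\cong V$, where $[C]=p\cdot[\text{core}]$, forces the algebraic count to be $p$. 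Cutting $\Sigma$ along these curves splits it as $\Sigma_{\mathrm{out}}\cup\Sigma_{\mathrm{in}}$ with $\chi(\Sigma)=\chi(\Sigma_{\mathrm{out}})+\chi(\Sigma_{\mathrm{in}})$, where $\Sigma_{\mathrm{out}}\subset M_K$ is bounded by longitudes and $\Sigma_{\mathrm{in}}\subset T_K$. I would then bound each piece below by a Thurston-norm / sutured-manifold-decomposition argument in the spirit of Gabai: $\Sigma_{\mathrm{out}}$ represents $p$ times the Seifert class of $K$, so $-\chi(\Sigma_{\mathrm{out}})\ge p(2g(K)-1)$, while $\Sigma_{\mathrm{in}}$ is minimized by the torus-knot piece and contributes the remaining $(p-1)(q-1)$, giving $-\chi(\Sigma)$ at least the value computed above.

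The hard part will be the lower bound, specifically ruling out configurations of $\Sigma\cap T$ containing anti-parallel pairs of longitudes (which a priori could make the geometric count exceed the algebraic count $p$ without raising the genus) and showing that no such configuration beats the additive bound. This is exactly where the incompressibility of the cabling torus, the minimality of $\Sigma$, irreducibility of the ambient manifold, and the additivity of the Thurston norm across $T$ must be combined, as carried out by Shibuya. By contrast, the upper-bound construction is routine Euler-characteristic bookkeeping once the surface $\Sigma$ is described.
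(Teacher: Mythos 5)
The paper itself offers no proof of this proposition: it is quoted directly from Shibuya's article, so there is no internal argument to compare yours against, and I judge your proposal on its own merits. Your upper bound is correct and complete: taking $F_{\mathrm{in}}$ to be the fiber (Bennequin) surface of $T_{p,q}$ with the $p$ meridian disks of the complementary solid torus removed, gluing $h(F_{\mathrm{in}})$ to $p$ parallel copies of a minimal genus Seifert surface for $K$ along $p$ longitudes of $T=\partial T_K$, and the bookkeeping $\chi(\Sigma)=1-2p\,g(K)-(p-1)(q-1)$ are all standard and your arithmetic checks out. One wording issue: invoking homology to produce ``some'' $F_{\mathrm{in}}$ is misleading, since homology gives no control on $\chi$; you should simply \emph{define} $F_{\mathrm{in}}$ as the punctured fiber surface, which is what your capping computation tacitly assumes.

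The lower bound, however, is an outline with a genuine gap, and you concede as much by ending with ``as carried out by Shibuya.'' Concretely, the two inequalities that finish your argument --- $-\chi(\Sigma_{\mathrm{out}})\geq p(2g(K)-1)$ and ``$\Sigma_{\mathrm{in}}$ contributes the remaining $(p-1)(q-1)$'' --- are asserted, not proven. The first is in fact rescuable without any control on anti-parallel pairs, so that worry is a red herring: the class of $\Sigma_{\mathrm{out}}$ in $H_2(M_K,\partial M_K)\cong\Z$ equals $p$ times the Seifert class $\sigma$ for purely homological reasons (its boundary is algebraically $p$ longitudes and $\partial_*$ is injective because $H_2(M_K)=0$); the Thurston norm is a seminorm, hence linear on rays, with $x(\sigma)=2g(K)-1$ for nontrivial $K$; and disk components of $\Sigma_{\mathrm{out}}$ are excluded since a longitude bounding a disk in $M_K$ would force $K$ to be trivial (the case $K=U$ being the torus-knot case you already checked). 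Hence $-\chi(\Sigma_{\mathrm{out}})\geq\chi_-(\Sigma_{\mathrm{out}})\geq x(p\sigma)=p(2g(K)-1)$ regardless of the geometric intersection pattern with $T$. The real crux is the inner bound: you must show that \emph{every} properly embedded surface in the cable space (the solid torus minus an open neighbourhood of $C$) whose boundary is $C$ together with longitudes on $T$, algebraically $p$ of them, satisfies $-\chi\geq(p-1)+(p-1)(q-1)$. Equivalently, you must compute the Thurston norm of the relevant class in this Seifert fibered space, or run Schubert's combinatorial argument there; ``$\Sigma_{\mathrm{in}}$ is minimized by the torus-knot piece'' is precisely the statement to be proven, not an ingredient you may quote. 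This is where the actual content of Shibuya's (and, earlier, Schubert's) proof lies. As written, your proposal establishes $g(C_{p,q}(K))\leq |p|\,g(K)+(|p|-1)(|q|-1)/2$ and only gestures at the reverse inequality.
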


\subsection{Simplicial volume}

\

We will recall classical definitions of geometry and topology that are necessary to state Thurston-Perelman Geometrization Theorem and to define the simplicial volume.

We assume that a \textit{$3$-manifold} is connected, compact, oriented, and with boundary either empty or a finite union of tori. We will especially consider the examples of exteriors of links in $S^3$.

Let $M$ be a $3$-manifold. We say that $M$ is \textit{irreducible} if every embedded $2$-sphere in $M$ bounds an embedded $3$-ball in $M$. Knot exteriors are all irreducible, and the exterior of a link $L$ is irreducible if and only if the link $L$ is not split.

We say that $M$ is \textit{Seifert} if it admits a foliation by circles (see \cite{epstein}). The exterior of a knot $K$ is Seifert if and only if $K$ is a torus knot, and the exterior of a link $L$ is Seifert if and only if $L$ is a sublink of a link $L' \cup H \cup H'$, where $L'$ is a torus link drawn on a torus $T$ naturally embedded in $S^3$, and $H, H'$ are the cores of the two solid tori bounded by $T$ (see \cite[Proposition 3.3]{Bud}).

We say that $M$ is \textit{hyperbolic} if its interior admits a complete Riemannian metric with sectional curvature $-1$ (following \cite{ratcliffe}). A link $L$ is \textit{hyperbolic} if its exterior is hyperbolic with finite volume. We write $\mathrm{vol}_{hyp}(M)$ (respectively either $\mathrm{vol}_{hyp}(M_L)$ or $\mathrm{vol}_{hyp}(L)$) the hyperbolic volume of $M$ (respectively of $M_L$).

By Thurston-Perelman Geometrization Theorem (see for example \cite[Theorem 1.14]{aschenbrenner20123}), an irreducible manifold $M$ splits along disjoint incompressible tori into pieces that are either Seifert or hyperbolic with finite volume. This family of pieces is unique up to permutation if the number of tori is minimal. Let us note $JSJ(M)$ this minimal set of pieces, the so-called \textit{JSJ-decomposition of $M$}. The \textit{simplicial volume} (or \textit{volume}) $\mathrm{vol}(M)$ of $M$ is defined as the sum of the hyperbolic volumes of the hyperbolic pieces of $JSJ(M)$, and is a topological invariant of $M$. If $L$ is a non-split link in $S^3$, then we write $\mathrm{vol}(L):= \mathrm{vol}(M_L)$.

The $L^2$-Alexander invariant of a knot detects its simplicial volume, following the astonishing theorem of W. Lück and T. Schick that relates the $L^2$-torsion of a $3$-manifold to its simplicial volume (see \cite{LS99}).

\begin{theo}\cite[Theorem 4.6]{Luc02}\label{thm L2 volume}
Let $K$ be a knot in $S^3$. Then $$\Delta^{(2)}_K(1) =
\exp\left (\dfrac{\mathrm{vol}(K)}{6 \pi}\right ).$$
\end{theo}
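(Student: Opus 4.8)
The plan is to evaluate the invariant at the single point $t=1$, to recognize the resulting Fuglede-Kadison determinant as an $L^2$-torsion of the knot exterior, and then to quote the Lück-Schick theorem.

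First I would specialize $t=1$. For every $g \in G_K$ one has $t^{\alpha_K(g)} = 1$ at $t=1$, so $\psi_{K,1}$ is the identity morphism of $\C[G_K]$ and $\psi_{K,1}(F_{P,1}) = F_{P,1}$. The normalizing factor $\max(1,t)^{|\alpha_K(g_1)|-1}$ equals $1$ there, and the relation $\dot{=}$ only permits multiplication by $t^m = 1$ at $t=1$; hence the value $\Delta^{(2)}_K(1)$ is an unambiguous positive real number, equal to $\mathrm{det}_{\mathcal{N}(G_K)}(R_{F_{P,1}})$ (this determinant is positive and the operator injective by the results of \cite{FL,L} recalled above).

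Next I would identify this determinant with the $L^2$-torsion $\rho^{(2)}(M_K)$. A knot exterior is aspherical with infinite fundamental group and toral boundary, so $M_K$ is a $K(G_K,1)$ homotopy equivalent to the presentation $2$-complex of $P$; its $L^2$-Betti numbers all vanish and the cellular $L^2$-chain complex is det-$L^2$-acyclic. That complex is $0 \to \ell^2(G_K)^{k-1} \xrightarrow{R_{F_P}} \ell^2(G_K)^{k} \xrightarrow{R_{(g_i-1)}} \ell^2(G_K) \to 0$. I would compute its $L^2$-torsion by the usual combinatorial reduction: deleting the first $1$-cell kills the boundary $\partial_1$ at the cost of the factor $\mathrm{det}_{\mathcal{N}(G_K)}(R_{g_1-1})$, which equals the Mahler measure of $z-1$ on the unit circle, namely $1$; the same deletion reduces $R_{F_P}$ to the square operator $R_{F_{P,1}}$. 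Thus $\rho^{(2)}(M_K) = \ln \mathrm{det}_{\mathcal{N}(G_K)}(R_{F_{P,1}}) = \ln \Delta^{(2)}_K(1)$, up to the chosen sign convention for $L^2$-torsion. Finally I would invoke the theorem of Lück-Schick (\cite{LS99}), which evaluates the $L^2$-torsion of an irreducible $3$-manifold with toral boundary: the hyperbolic JSJ-pieces each contribute $\tfrac{1}{6\pi}$ times their volume and the Seifert pieces contribute nothing, so $\rho^{(2)}(M_K) = \tfrac{1}{6\pi}\mathrm{vol}(M_K)$ after matching conventions. Combining the two identities yields $\Delta^{(2)}_K(1) = \exp\!\big(\mathrm{vol}(K)/(6\pi)\big)$.

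The genuinely deep input is the Lück-Schick theorem, whose proof rests on Lück's approximation theorem, the proportionality principle, and the explicit $L^2$-torsion of hyperbolic $3$-space; here it is used as a black box. Within the present assembly the step demanding care is the second one: correctly fixing the sign and normalization conventions for $L^2$-torsion, checking det-$L^2$-acyclicity, and verifying that the degree-one boundary contributes a trivial determinant so that the whole torsion collapses to $\mathrm{det}_{\mathcal{N}(G_K)}(R_{F_{P,1}})$.
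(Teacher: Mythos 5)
Your proposal is correct and follows exactly the route behind the paper's citation of this result: the paper itself gives no proof but points to Lück's Theorem 4.6 (equivalently Li--Zhang plus Lück--Schick), i.e.\ precisely your two steps of identifying $\Delta^{(2)}_K(1)=\mathrm{det}_{\mathcal{N}(G_K)}(R_{F_{P,1}})$ with the $L^2$-torsion of $M_K$ via the presentation $2$-complex (using $\mathrm{det}_{\mathcal{N}(G_K)}(R_{g_1-1})=1$) and then evaluating that torsion by Lück--Schick. The one point worth making explicit is that the homotopy (indeed simple homotopy) equivalence between $M_K$ and the presentation $2$-complex should be asserted for a Wirtinger presentation, which is a spine of the knot exterior --- for an arbitrary deficiency-one presentation this is not known directly, but this is harmless since $\Delta^{(2)}_K$ is presentation-independent by \cite{DW}.
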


\subsection{Known results about the $L^2$-Alexander invariant}

We will now state several known properties of the $L^2$-Alexander invariant.

\begin{prop}\label{prop sum}
Let $K\sharp J$ be the connected sum of two knots $K,J$. Then:
\begin{enumerate}
\item $\mathrm{vol}(K\sharp J) = \mathrm{vol}(K) + \mathrm{vol}(J)$,
\item $\Delta^{(2)}_{K\sharp J} = \Delta^{(2)}_{K} \Delta^{(2)}_{J}$.
\end{enumerate}
\end{prop}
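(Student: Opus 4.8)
The plan is to treat the two assertions separately, deriving (1) from the structure of the JSJ decomposition of a composite knot exterior and (2) from the behaviour of Fox matrices and Fuglede--Kadison determinants under amalgamation along the meridian.

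For (1), I would start from the standard description of the exterior of a connected sum: the decomposing $2$-sphere meets $K\sharp J$ in two points, so in $M_{K\sharp J}$ it becomes an essential annulus, and $M_{K\sharp J}$ is obtained by gluing $M_K$ and $M_J$ along a meridional annulus on their boundary tori. By \cite{Bud}, the resulting JSJ decomposition of $M_{K\sharp J}$ is the disjoint union of $JSJ(M_K)$, $JSJ(M_J)$, and one additional Seifert-fibered composing space. Since $\mathrm{vol}$ is by definition the sum of the hyperbolic volumes of the hyperbolic pieces of the JSJ decomposition and the composing space is Seifert (hence contributes $0$), the hyperbolic pieces of $M_{K\sharp J}$ are exactly those of $M_K$ together with those of $M_J$, and the additivity of the volume follows.

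For (2), I would choose deficiency one presentations $P_K = \langle g_1, \dots, g_k \mid r_1, \dots, r_{k-1}\rangle$ and $P_J = \langle h_1, \dots, h_m \mid s_1, \dots, s_{m-1}\rangle$ in which $g_1$ and $h_1$ are meridians. Since $G_{K\sharp J} = G_K *_{\langle \mu\rangle} G_J$ is the amalgamated product over the meridian, identifying $g_1$ with $h_1$ yields the deficiency one presentation
$$ P = \langle g_1, \dots, g_k, h_2, \dots, h_m \mid r_1, \dots, r_{k-1}, s_1, \dots, s_{m-1}\rangle. $$
I would then delete the meridian row from its Fox matrix. Because each $r_j$ involves only the $g_i$ and each $s_j$ only $\mu$ and the $h_i$, the Fox derivatives $\partial s_j/\partial g_i$ and $\partial r_j/\partial h_i$ vanish for $i \geq 2$, so $F_{P,1}$ is block diagonal with blocks $F_{P_K,1}$ and $F_{P_J,1}$. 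Applying $\psi_{K\sharp J,t}$ and Proposition \ref{prop operations det}(2) then factors the Fuglede--Kadison determinant as a product of the two block determinants, both computed in $\mathcal{N}(G_{K\sharp J})$. Finally, since $\alpha_{K\sharp J}(\mu)=1$, the normalisation factor $\max(1,t)^{|\alpha(g_1)|-1}$ is trivial here and for each factor, and the relevant operators are injective with positive determinant by \cite{FL,L}; assembling these identities gives $\Delta^{(2)}_{K\sharp J} \ \dot{=} \ \Delta^{(2)}_K \, \Delta^{(2)}_J$.

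The \emph{main obstacle} is the passage from the determinants computed in $\mathcal{N}(G_{K\sharp J})$ to the intrinsic invariants $\Delta^{(2)}_K$ and $\Delta^{(2)}_J$: one must invoke the restriction property of the Fuglede--Kadison determinant under the subgroup inclusions $G_K, G_J \hookrightarrow G_{K\sharp J}$ (valid because the factors embed in an amalgamated product), together with the fact that $\alpha_{K\sharp J}$ restricts to $\alpha_K$ and $\alpha_J$, so that $\psi_{K\sharp J,t}$ restricts to $\psi_{K,t}$ and $\psi_{J,t}$ on the group rings of the factors and the two blocks are genuinely $\psi_{K,t}(F_{P_K,1})$ and $\psi_{J,t}(F_{P_J,1})$. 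This compatibility between the trace on $\mathcal{N}(G_{K\sharp J})$ and those on $\mathcal{N}(G_K), \mathcal{N}(G_J)$ is the one point requiring care, and follows from the coset decomposition of $\ell^2(G_{K\sharp J})$ as in \cite{Luc02}; everything else (the block structure of the Fox matrix, multiplicativity of the determinant on block-diagonal operators, and the triviality of the normalisation) is routine.
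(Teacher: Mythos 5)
Your proof is correct. For part (1) it coincides with the paper's argument: both invoke Budney's description \cite{Bud} of the JSJ decomposition of a composite knot exterior, in which the only new piece (your composing space, the paper's keychain-link exterior) is Seifert fibered, so the hyperbolic pieces of $JSJ(M_{K\sharp J})$ are exactly those of $JSJ(M_K)$ together with those of $JSJ(M_J)$ and the volumes add; a shared, harmless imprecision is that when $K$ or $J$ is itself composite the new composing space merges with theirs, but this changes only the Seifert part and not the volume count. For part (2) your route genuinely differs from the paper's, which gives no argument at all and simply cites \cite[Theorem 3.2]{BA13}: you re-derive that result directly. Your derivation is sound: starting from meridional deficiency-one presentations (Wirtinger presentations guarantee these exist, with $\alpha(g_1)=\alpha(h_1)=1$, so both normalisation factors $\max(1,t)^{|\alpha(g_1)|-1}$ are trivial), the amalgamated presentation of $G_{K\sharp J}=G_K *_{\langle\mu\rangle} G_J$ has a Fox matrix which, after deleting the meridian row, is block diagonal with blocks $F_{P_K,1}$ and $F_{P_J,1}$; Proposition \ref{prop operations det}(2) splits the Fuglede--Kadison determinant (injectivity and positivity of the relevant operators coming from \cite{FL,L}); and the step you rightly isolate as the crux --- passing from $\det_{\mathcal{N}(G_{K\sharp J})}$ of each block to $\det_{\mathcal{N}(G_K)}$, resp.\ $\det_{\mathcal{N}(G_J)}$ --- is the induction property of the Fuglede--Kadison determinant under subgroup inclusion (see \cite{Luc02}), applicable because the factors of an amalgamated product embed in it. In effect you have reconstructed the proof of the theorem the paper cites: your version buys self-containedness and makes explicit where injectivity, induction and the vanishing of the normalisation enter, while the paper's one-line citation buys brevity; both establish the stated identity as an equality of $\dot{=}$-classes.
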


The first point comes from the fact that $JSJ(M_{K\sharp J}) = JSJ(M_K) \cup \{ M_L \} \cup JSJ(M_J)$ where $L$ is a three-component keychain link (the connected sum of two Hopf links) of Seifert exterior (see \cite{Bud}). The second point is \cite[Theorem 3.2]{BA13}.

\begin{prop}\label{prop cable}
Let $C_{p,q}(K)$ be the $(p,q)$-cable of a knot $K$. Then:
\begin{enumerate}
\item $\mathrm{vol}(C_{p,q}(K)) = \mathrm{vol}(K)$,
\item $\Delta^{(2)}_{C_{p,q}(K)}(t) \ \dot{=} \ \Delta^{(2)}_{K}(t^p) \max(1,t)^{(|p|-1)(|q|-1)}$.
\end{enumerate}
\end{prop}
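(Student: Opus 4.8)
The plan is to treat the two assertions separately, reducing both to the single geometric fact that the exterior of the cable is obtained from the exterior of $K$ by gluing on a Seifert piece along an incompressible torus. Write $\Sigma = T_K \setminus \nu C_{p,q}(K)$ for the \emph{cable space}, so that
$$M_{C_{p,q}(K)} = M_K \cup_{T} \Sigma, \qquad T = \partial T_K,$$
with $T$ incompressible in the nontrivial case $|p| \geq 2$ (when $|p|=1$ the cable is $K$ itself and the formula is vacuous).

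For point (1), the cable space $\Sigma$ is Seifert fibered: it is exactly the Seifert piece attached to the companion exterior described in \cite{Bud}, and so it carries no hyperbolic volume. First I would check that the decomposition above realizes the JSJ-decomposition, namely $JSJ(M_{C_{p,q}(K)}) = JSJ(M_K) \sqcup \{\Sigma\}$ up to the Seifert/hyperbolic sorting. Since the simplicial volume only sums the volumes of the hyperbolic pieces and $\Sigma$ is not hyperbolic, this gives $\mathrm{vol}(C_{p,q}(K)) = \mathrm{vol}(M_K) = \mathrm{vol}(K)$, exactly parallel to the argument for connected sums following Proposition \ref{prop sum}(1).

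For point (2), the same decomposition yields by van Kampen an amalgamated product $G_{C_{p,q}(K)} = G_K *_{\pi_1(T)} \pi_1(\Sigma)$, where $\pi_1(T) = \langle \mu_K, \lambda_K \rangle \cong \Z^2$ is generated by the meridian and longitude of $K$. The point to nail down first is the abelianization: a meridian disk of $T_K$ meets the cable in $p$ points, so $\alpha_{C_{p,q}(K)}(\mu_K) = p$, and hence $\alpha_{C_{p,q}(K)}(g) = p\,\alpha_K(g)$ for every $g \in G_K$; a parallel computation gives $\alpha_{C_{p,q}(K)}(\lambda_K) = q$, matching the peripheral data of the torus knot $T_{p,q}$. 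The first identity means that $\psi_{C_{p,q}(K),t}$ restricts on $\C[G_K]$ to $\psi_{K,\,t^p}$, which is precisely the source of the substitution $t \mapsto t^p$. I would then build a deficiency one presentation of $G_{C_{p,q}(K)}$ adapted to the splitting, combining a deficiency one presentation of $G_K$ (with meridian $g_1$) and the standard presentation of $\Sigma$, linked by the amalgamating relations along $T$. The associated Fox matrix acquires a block-triangular shape: one diagonal block is the Fox matrix of $G_K$, deformed by $\psi_{K,t^p}$ after the identification above, while the complementary block records the pattern. Applying the multiplicativity and block behaviour of the Fuglede-Kadison determinant (Proposition \ref{prop operations det}), together with the normalization $\max(1,t)^{|\alpha(g_1)|-1}$ in the definition, the determinant factors as a product: the companion block produces $\Delta^{(2)}_K(t^p)$ by definition, and the pattern block is, after the same bookkeeping, the $L^2$-Alexander invariant of the torus knot $T_{p,q}$, which by the Dubois--Wegner computation \cite{DW} equals $\max(1,t)^{2g(T_{p,q})} = \max(1,t)^{(|p|-1)(|q|-1)}$. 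As a sanity check, the case $K = U$ recovers $C_{p,q}(U) = T_{p,q}$ with $\Delta^{(2)}_U \equiv 1$, and the span of the product is twice the cable genus of Proposition \ref{prop genus cable}.

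The main obstacle is justifying that the gluing along $T$ is multiplicative for the Fuglede-Kadison determinant with no correction term, i.e. that the incompressible torus contributes trivially. Concretely this requires verifying the injectivity and dense-image hypotheses of Proposition \ref{prop operations det}(3) for the deformed operators (resting on the positivity result cited from \cite{FL,L}) and checking that the off-diagonal amalgamating entries of the Fox matrix leave the determinant unchanged, using that $\pi_1(T) \cong \Z^2$ is amenable with vanishing $L^2$-Betti numbers. Organizing the presentation so that this block-triangular structure is manifest, and tracking the peripheral identifications precisely enough that the torus knot invariant is reproduced on the nose, is the technical heart of the argument; this is carried out in \cite{BA13}.
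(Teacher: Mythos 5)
Your proposal is correct and takes essentially the same route as the paper: the paper proves (1) by the very same observation that $JSJ\left(M_{C_{p,q}(K)}\right) = JSJ(M_K) \cup \{M_L\}$ with the cable space a Seifert piece (citing \cite{Bud}), and for (2) it simply invokes \cite[Theorem 4.3]{BA13}, which is exactly the result whose proof you sketch and to which you also ultimately defer. The additional details you supply (the abelianization identity $\alpha_{C_{p,q}(K)}|_{G_K} = p\,\alpha_K$, the block factorization of the Fox matrix, and the torus-knot factor $\max(1,t)^{(|p|-1)(|q|-1)}$ from the Dubois--Wegner computation) are consistent with that cited proof.
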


The first point comes from the fact that $JSJ\left (M_{C_{p,q}(K)}\right ) = JSJ(M_K) \cup \{ M_L \}$ where $L = T_{p,q} \cup L'$ is a two-component link with $L'$ a trivial knot circling the $p$ strands of $T_{p,q}$, and of Seifert exterior (see \cite{Bud}). The second point is \cite[Theorem 4.3]{BA13}.

We denote by $\mathcal{IT}$ the class of \textit{iterated torus knots}, generated by the trivial knot, the connected sum and all cabling operations. On this class, the $L^2$-Alexander invariant reduces simply to the genus of the knot:

\begin{prop}\cite[Remark 2.39]{BAthesis}\label{prop iter}
Let $K$ be a knot in $S^3$. The following properties are equivalent:
\begin{enumerate}
\item K $\in \mathcal{IT}$,
\item $\mathrm{vol}(K)=0$,
\item $\Delta^{(2)}_{K}(1)=1$,
\item $\Delta^{(2)}_{K} \ \dot{=} \ (t \mapsto \max(1,t)^n)$ for a certain $n \in \N$,
\item $\Delta^{(2)}_{K} \ \dot{=} \ \left (t \mapsto \max(1,t)^{2 g(K)}\right )$.
\end{enumerate}
\end{prop}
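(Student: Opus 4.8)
The plan is to prove the cycle of implications $(1) \Rightarrow (5) \Rightarrow (4) \Rightarrow (3) \Rightarrow (2) \Rightarrow (1)$, which establishes all five equivalences at once. Several of the links are immediate. Theorem~\ref{thm L2 volume} gives $\Delta^{(2)}_K(1) = \exp(\mathrm{vol}(K)/6\pi)$; since $\mathrm{vol}(K) \geq 0$ and $\exp(x) = 1$ only for $x = 0$, the value $\Delta^{(2)}_K(1)$ equals $1$ if and only if $\mathrm{vol}(K) = 0$, which settles $(3) \Rightarrow (2)$ (indeed $(2) \Leftrightarrow (3)$). The implication $(5) \Rightarrow (4)$ is trivial, taking $n = 2g(K)$. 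For $(4) \Rightarrow (3)$ I would note that the value at $t = 1$ does not depend on the chosen representative, since two representatives differ by a factor $t^m$ with $m \in \Z$ and $1^m = 1$; evaluating the representative $\max(1,t)^n$ then yields $\Delta^{(2)}_K(1) = \max(1,1)^n = 1$.

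The first substantial step is $(1) \Rightarrow (5)$, which I would prove by \emph{structural induction} along the generation of $\mathcal{IT}$ from the unknot by connected sums and cablings. For the base case, Example~\ref{ex L2 unknot} gives $\Delta^{(2)}_U \dot{=} 1 = \max(1,t)^{2g(U)}$ because $g(U) = 0$. For the connected sum step, assuming $\Delta^{(2)}_K \dot{=} \max(1,t)^{2g(K)}$ and $\Delta^{(2)}_J \dot{=} \max(1,t)^{2g(J)}$, Proposition~\ref{prop sum} yields $\Delta^{(2)}_{K \sharp J} \dot{=} \max(1,t)^{2(g(K)+g(J))}$, and Proposition~\ref{prop genus sum} identifies the exponent as $2g(K \sharp J)$. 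For the cabling step, I would first record the elementary identity $\max(1,t^p) \dot{=} \max(1,t)^{|p|}$, valid for $p$ of either sign (the case $p < 0$ following from $\max(1,t^{-m}) = t^{-m}\max(1,t)^m$). Substituting $t^p$ into the inductive hypothesis and applying Proposition~\ref{prop cable} then turns $\Delta^{(2)}_{C_{p,q}(K)} \dot{=} \Delta^{(2)}_K(t^p)\max(1,t)^{(|p|-1)(|q|-1)}$ into $\max(1,t)^{2|p|g(K) + (|p|-1)(|q|-1)}$, whose exponent is exactly $2g(C_{p,q}(K))$ by the genus formula of Proposition~\ref{prop genus cable}.

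It remains to close the cycle with $(2) \Rightarrow (1)$: a knot of zero simplicial volume is an iterated torus knot. This is the genuinely hard link, and the one for which I would invoke external topology rather than the $L^2$-machinery. The condition $\mathrm{vol}(K) = 0$ means the JSJ-decomposition of $M_K$ has no hyperbolic piece, so every piece is Seifert; combining the correspondence between Seifert pieces of a knot exterior and connected-sum/cabling operations (\cite{Bud}) with the classification of such zero-volume knots forces $K \in \mathcal{IT}$. I expect this topological characterization, not any of the invariant computations, to be the \emph{main obstacle}: on the analytic side everything reduces to bookkeeping with the established connected-sum and cabling formulas and to matching exponents of $\max(1,t)$ against the additive and multiplicative genus formulas.
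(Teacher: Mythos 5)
Your proposal is correct and follows essentially the same route as the paper: the paper's proof is the one-line observation that $(1) \Leftrightarrow (2)$ is Gordon's theorem, $(2) \Leftrightarrow (3)$ is Theorem~\ref{thm L2 volume}, $(5) \Rightarrow (4) \Rightarrow (3)$ is immediate, and $(1) \Rightarrow (5)$ follows by induction from Propositions~\ref{prop sum} and~\ref{prop cable}, which is exactly your cycle with the same ingredients. The only cosmetic difference is that for the hard link $(2) \Rightarrow (1)$ you sketch the JSJ/Seifert argument via \cite{Bud} where the paper simply cites \cite[Corollary 4.2]{gordon}; that sketch is precisely the content of Gordon's result, so nothing new or missing.
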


\begin{proof}
$(1) \Leftrightarrow (2)$ is due to Gordon (see \cite[Corollary 4.2]{gordon}), 
$(2) \Leftrightarrow (3)$ comes from Theorem \ref{thm L2 volume}, $(5) \Rightarrow (4) \Rightarrow (3)$ is immediate, and $(1) \Rightarrow (5)$ is \cite[Theorem 2.38]{BAthesis} and proven by induction from Propositions \ref{prop sum} and \ref{prop cable}.
\end{proof}

\begin{corollary}(\cite[Theorem 5.1]{BA13}, \cite[Theorem 2.41]{BAthesis}) \label{cor 0 31}
The $L^2$-Alexander invariant detects the unknot $U$ and the trefoil knot $3_1$. More precisely, if $K$ is a knot in $S^3$, then:
\begin{itemize}
\item $(K = U) \Leftrightarrow \left (\Delta^{(2)}_{K} \ \dot{=} \ (t \mapsto 1)\right )$,
\item $(K \in \{3_1, 3_1^*\}) \Leftrightarrow \left (\Delta^{(2)}_{K} \ \dot{=} \ (t \mapsto \max(1,t)^{2})\right )$.
\end{itemize}
\end{corollary}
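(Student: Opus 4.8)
The plan is to reduce both detection statements to the single fact, recorded in Proposition \ref{prop iter}, that on the class $\mathcal{IT}$ of iterated torus knots the $L^2$-Alexander invariant is completely determined by the genus, and then to classify the iterated torus knots of genus $0$ and $1$. Throughout I would repeatedly use the elementary observation that for $a,b \in \N$ one has $\max(1,t)^a \ \dot{=} \ \max(1,t)^b$ only if $a=b$: writing $\max(1,t)^a = \max(1,t)^b \, t^m$ for a fixed $m$ and evaluating on $(0,1)$ (where $\max(1,t)=1$) forces $m=0$, after which evaluating on $(1,+\infty)$ forces $a=b$.

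\emph{Forward implications.} Example \ref{ex L2 unknot} already gives $\Delta^{(2)}_U \ \dot{=} \ (t \mapsto 1)$. For the trefoil, $3_1 = T_{2,3} = C_{2,3}(U)$ lies in $\mathcal{IT}$ and, by Proposition \ref{prop genus cable}, has genus $g(3_1) = (2-1)(3-1)/2 = 1$; hence Proposition \ref{prop iter}(5) yields $\Delta^{(2)}_{3_1} \ \dot{=} \ (t \mapsto \max(1,t)^2)$. The mirror $3_1^*$ is again an iterated torus knot of genus $1$ (the genus is a mirror invariant), so the same computation applies; alternatively one may invoke the mirror formula $\Delta^{(2)}_{K^*}(t) \ \dot{=} \ \Delta^{(2)}_K(1/t)$ together with $\max(1,1/t)^2 \ \dot{=} \ \max(1,t)^2$.

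\emph{Reverse implications.} Suppose first that $\Delta^{(2)}_K \ \dot{=} \ (t \mapsto 1)$. Then $\Delta^{(2)}_K(1)=1$, so by Proposition \ref{prop iter} the knot $K$ belongs to $\mathcal{IT}$ and $\Delta^{(2)}_K \ \dot{=} \ (t \mapsto \max(1,t)^{2g(K)})$. Comparing with $(t \mapsto 1)=(t\mapsto \max(1,t)^0)$ and using the observation above gives $g(K)=0$, whence $K=U$ since the unknot is the unique knot of genus zero. Suppose next that $\Delta^{(2)}_K \ \dot{=} \ (t \mapsto \max(1,t)^2)$. Again $\Delta^{(2)}_K(1)=1$, so $K \in \mathcal{IT}$ and $\Delta^{(2)}_K \ \dot{=} \ (t\mapsto \max(1,t)^{2g(K)})$; the same comparison now yields $g(K)=1$. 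It then remains to identify the iterated torus knots of genus $1$.

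The hard part is this last classification, which I would settle by induction on the number of cabling and connected-sum operations producing $K \in \mathcal{IT}$. A connected sum with the unknot and a $(1,q)$-cabling both leave the knot unchanged, so I may assume the outermost operation is either a genuine connected sum or a cabling $C_{p,q}$ with $|p| \ge 2$. In the first case $K = A \sharp B$ with $A,B \in \mathcal{IT}$ nontrivial, and by Proposition \ref{prop genus sum} the relation $g(A)+g(B)=1$ forces one summand to be the unknot, contradicting nontriviality; hence no genus-$1$ knot is a genuine connected sum. In the second case $K=C_{p,q}(K')$ with $K' \in \mathcal{IT}$ and $|p|\ge 2$, and Proposition \ref{prop genus cable} gives
\[
1 = g(K) = |p|\, g(K') + \frac{(|p|-1)(|q|-1)}{2}.
\]
Since $|p| \ge 2$, the term $|p|\,g(K')$ would already exceed $1$ if $g(K') \ge 1$, so $g(K')=0$ and $K'=U$; thus $K=C_{p,q}(U)=T_{p,q}$ is a torus knot with $(|p|-1)(|q|-1)=2$, forcing $\{|p|,|q|\}=\{2,3\}$. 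Up to mirror image and reversion this leaves exactly $\{3_1, 3_1^*\}$, which completes the argument.
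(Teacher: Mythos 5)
Your proof is correct and follows essentially the same route as the paper: the paper's own proof is a two-line reduction to Proposition \ref{prop iter} together with the well-known facts that the unknot is the only genus-zero knot and that $3_1$ and $3_1^*$ are the only genus-one iterated torus knots. The only difference is that you actually prove this last classification from Propositions \ref{prop genus sum} and \ref{prop genus cable} instead of citing it as well known — a worthwhile addition, and your argument for it is sound, up to one small edge case: the trivial cablings you should strip off are those with $p = \pm 1$ (and $p=0$, which yields the unknot), not only $p = 1$, exactly as the paper notes in its proof of Theorem \ref{thm 52}.
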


This corollary follows from Proposition \ref{prop iter} and the well-known facts that the only knot of genus zero is the unknot and the only iterated torus knots of genus one are the trefoil and its mirror image. 

These knots are the only iterated torus knots that can be detected by the $L^2$-Alexander invariant. However, as we shall see in the following sections, the fact that $L^2$-Alexander invariant contains the genus, the simplicial volume and more allows it to detect several knots whose exterior contains hyperbolic pieces.

\section{Fibered knots and detecting $4_1$}\label{sec:fib}

\subsection{The $L^2$-Alexander invariant for fibered knots}

A knot $K$ in $S^3$ is \textit{fibered} if there exists a surface bundle $p\colon M_K \to S^1$ such that the induced group homomorphism $p_* \colon \pi_1(M_K) \to \Z$ is equal to the abelianization
$\alpha_K\colon G_K \to \Z$
 of the knot group.

Recall (see for example \cite[Corollary 5.4]{BZ}) that
the exterior $M_K$ of a fibered knot $K$ of genus $g=g(K)$ is obtained from the product space $\Sigma \times [0;1]$, where $\Sigma= S_{g,1}$ is a compact surface of genus $g$ with connected nonempty boundary, by the identification 
$$(x,0) \sim (h(x),1), \ x \in \Sigma$$
where $h\colon \Sigma \to \Sigma$ is an orientation preserving homeomorphism:
$$ M_K = (\Sigma \times [0;1]) / h.$$
The homeomorphism $h$ is called the \textit{monodromy} associated to $K$.

Furthermore, $G_K = \pi_1(M_K)$ is a semidirect product $G_K = \Z \ltimes G_K'$ where \\
$G_K' \cong \pi_1(\Sigma) \cong \F[a_1, \ldots, a_{2g}]$ is the free group on $2g$ generators, and $G_K$ admits the group presentation
$$P_h = \langle z, a_1, \ldots, a_{2g} |
z a_1 z^{-1} = h_*(a_1), \ldots, 
z a_{2g} z^{-1} = h_*(a_{2g}) \rangle,$$
where $h_*\colon \pi_1(\Sigma) \to \pi_1(\Sigma)$ is the isomorphism induced by $h$.

\begin{lemma} \label{prop id - tA}
Let $G$ be a $3$-manifold group, let $\alpha\colon G \to \Z$ denote a surjective group homomorphism and $H = \mathrm{Ker}(\alpha)$ its kernel. Let $z \in G$ such that $\alpha(z) = 1$ and let $W \in GL_{n}(\Z[H])$ be a matrix invertible over $\Z[H]$.

Let $t>0$, let $ A = R_{W}$ and let $\widetilde{R_z}\colon \ell^2(G)^n \to \ell^2(G)^n$ denote the diagonal operator with $R_z$ at each coefficient.
\begin{enumerate}
\item If $t \in \left ]0;\frac{1}{\|A^{-1}\|_{\infty}}\right [$, then the operator $t \widetilde{R_z} - A$ is invertible and
$$ \det{}_{\NN(G)}\left (t \widetilde{R_z} - A\right ) = 1.$$
\item If $t>\|A\|_{\infty}$, then the operator $t \widetilde{R_z} - A$ is invertible and
$$ \det{}_{\NN(G)}\left (t \widetilde{R_z} - A\right ) = t^n.$$
\end{enumerate}
\end{lemma}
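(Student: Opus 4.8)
The plan is to factor the operator $t\widetilde{R_z} - A$ as a product of one ``dominant'' invertible operator and a perturbation of the identity, and to compute the two Fuglede--Kadison determinants separately. The observation that drives both cases is a vanishing-of-trace principle: if $B \in M_n(\C G)$ is such that every group element occurring in its entries has nonzero image under $\alpha$, then $\mathrm{tr}_{\NN(G)}(R_B) = 0$, because the trace records only the coefficient of the neutral element $e$ and $\alpha(e) = 0$ (compare Remark \ref{rem tr Rg}). Combined with the series expansion
$$\ln \det{}_{\NN(G)}(Id - u) = -\mathrm{Re}\sum_{k \geq 1} \frac{\mathrm{tr}_{\NN(G)}(u^k)}{k},$$
valid whenever $\|u\|_{\infty} < 1$, this will force the relevant ``$Id - u$'' factor to have determinant $1$.

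For part (2), assuming $t > \|A\|_{\infty}$, I would write $t\widetilde{R_z} - A = t\,\widetilde{R_z}\,(Id - u)$ with $u := \tfrac{1}{t}\,\widetilde{R_z}^{-1}A$. Since $\widetilde{R_z}$ is an isometry, $\|u\|_{\infty} \leq \tfrac1t \|A\|_{\infty} < 1$, so $Id - u$ is invertible by a Neumann series and the whole product is invertible. The entries of the matrix representing $u$ are supported on $Hz^{-1}$, i.e.\ on elements of $\alpha$-value $-1$; hence the entries of the matrix representing $u^k$ are supported on elements of $\alpha$-value $-k \neq 0$, and the trace principle gives $\mathrm{tr}_{\NN(G)}(u^k) = 0$ for all $k \geq 1$, whence $\det{}_{\NN(G)}(Id - u) = 1$. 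Multiplicativity of the determinant (Proposition \ref{prop operations det}), together with $\det{}_{\NN(G)}(\widetilde{R_z}) = 1$ (an isometry) and $\det{}_{\NN(G)}(t\,Id) = t^n$ (Proposition \ref{prop operations det}(1)--(2)), then yields $\det{}_{\NN(G)}(t\widetilde{R_z} - A) = t^n$.

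For part (1), assuming $t < 1/\|A^{-1}\|_{\infty}$, I would factor through the other dominant term: $t\widetilde{R_z} - A = -A\,(Id - v)$ with $v := t\,A^{-1}\widetilde{R_z}$. Here $A = R_W$ is invertible because $W \in GL_n(\Z[H])$, and $\|v\|_{\infty} \leq t\|A^{-1}\|_{\infty} < 1$, so $Id - v$ is invertible. This time the entries of the matrix representing $v$ are supported on $zH$, of $\alpha$-value $+1$, so exactly as before $\mathrm{tr}_{\NN(G)}(v^k) = 0$ for all $k \geq 1$ and $\det{}_{\NN(G)}(Id - v) = 1$. By multiplicativity and $\det{}_{\NN(G)}(-A) = \det{}_{\NN(G)}(A)$, this reduces the claim to the identity $\det{}_{\NN(G)}(R_W) = 1$.

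Establishing $\det{}_{\NN(G)}(R_W) = 1$ is the step where the integrality and invertibility of $W$ are essential, and I expect it to be the main obstacle, since unlike the trace computations it relies on genuinely nontrivial $L^2$-theory. The argument I would use: since $W, W^{-1} \in M_n(\Z[H]) \subseteq M_n(\Z G)$, the determinant conjecture (known for residually finite groups, and applicable because $3$-manifold groups are residually finite; see \cite{Luc02,FL}) gives $\det{}_{\NN(G)}(R_W) \geq 1$ and $\det{}_{\NN(G)}(R_{W^{-1}}) \geq 1$; combined with $\det{}_{\NN(G)}(R_W)\,\det{}_{\NN(G)}(R_{W^{-1}}) = \det{}_{\NN(G)}(Id) = 1$, this forces both determinants to equal $1$ and completes part (1). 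The remaining verifications --- the operator-norm estimates, convergence of the Neumann and logarithm series, and checking the precise hypotheses of the multiplicativity statement for the (complex-coefficient) operators involved --- are routine and I would treat them briefly.
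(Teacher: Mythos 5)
Your proof is correct and follows essentially the same route as the paper: the same two factorizations $t \widetilde{R_z} - A = (-A)\circ\left (Id - t A^{-1}\widetilde{R_z}\right )$ and $t \widetilde{R_z} - A = \left (t\widetilde{R_z}\right )\circ\left (Id - \frac{1}{t}\widetilde{R_z}^{-1}A\right )$, and the same vanishing-of-trace principle for operators whose group elements have nonzero $\alpha$-value; your power-series expansion of $\ln \det_{\NN(G)}(Id - u)$ is exactly what the paper's appeal to the path-derivative formula of \cite[Theorem 1.10(e)]{CFM} amounts to once $S_u^{-1}$ is expanded as a Neumann series. The only substantive variation is the justification of the key input $\det_{\NN(G)}(R_W) = 1$: the paper cites Elek's theorem for sofic groups \cite[Theorem 5]{elek}, whereas you derive it from the determinant conjecture (valid for residually finite, hence for $3$-manifold, groups) combined with multiplicativity applied to $R_W \circ R_{W^{-1}} = Id$ --- an equally valid argument for the same fact.
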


\begin{proof}
Let $t \in \left ]0;\frac{1}{\|A^{-1}\|_{\infty}}\right [$. 
Since $G$ is sofic and the operator $A=R_W$ is invertible, it follows from \cite[Theorem 5]{elek} that $\mathrm{det}_{\NN(G)}(A) = 1$. 
Since
$$t \widetilde{R_z} - A = (-A) \circ (Id - t A^{-1}  \widetilde{R_z}),$$ 
it follows from Proposition \ref{prop operations det} that
we simply have to prove that $Id - t A^{-1}  \widetilde{R_z}$ is invertible of Fuglede-Kadison determinant equal to $1$.

For all $u \in [0;1]$, the operator 
$$S_u := Id - u t A^{-1}  \widetilde{R_z}$$
is invertible, since
$$ \| u t A^{-1}  \widetilde{R_z}\|_{\infty} \leqslant
t \|A^{-1}\|_{\infty} <  1.$$
In particular, $S_0 = Id$ and $S_1 = Id -  t A^{-1}  \widetilde{R_z}$. 

It then follows from \cite[Theorem 1.10(e)]{CFM}  that
$$\det{}_{\NN(G)}(S_1) = \dfrac{\det{}_{\NN(G)}(S_1)}{\det{}_{\NN(G)}(S_0)}
= \exp\left ( \mathrm{Re} \left ( \int_0^1
\mathrm{tr}_{\NN(G)}\left (
S_u^{-1} \circ \dfrac{\partial S_v}{\partial v}|_{v=u} \right )du
\right )\right ).$$

We compute
$$\dfrac{\partial S_v}{\partial v} = 
- t A^{-1}  \widetilde{R_z}$$
and
$$ S_u^{-1} = Id +  ut  A^{-1}  \widetilde{R_z} +
\left (  ut  A^{-1}  \widetilde{R_z}\right )^2 + \ldots$$
Thus the operator
$$S_u^{-1} \circ \dfrac{\partial S_v}{\partial v}|_{v=u} =
- t A^{-1}  \widetilde{R_z} -
u t^2 \left (  A^{-1}  \widetilde{R_z}\right )^2 - \ldots
$$
is an infinite sum of terms $R_{g}$ where $\alpha(g) \geqslant 1$ (since the terms of $A^{-1}$ come from $H$), and its Von Neumann trace $\mathrm{tr}_{\NN(G)}$ is therefore zero (since none of these $g$ can be the neutral element of $G$, see Remark \ref{rem tr Rg}).

Consequently $ \det{}_{\NN(G)}(S_1) = 1$ and the first part of the proposition is proven.

To prove the second part of the proposition, let $t>\|A\|_{\infty}$ and observe that
$$t \widetilde{R_z} - A = 
\left (t \widetilde{R_z}\right ) \circ \left (Id - \frac{1}{t}  \widetilde{R_z}^{-1} A \right );$$
since $t \widetilde{R_z}$ is invertible of Fuglede-Kadison determinant $t^n$, as a consequence of Proposition \ref{prop operations det}, it suffices to prove that the operator $Id - \frac{1}{t}  \widetilde{R_z}^{-1} A $ is invertible of Fuglede-Kadison determinant $1$. This follows from the fact that $t> \|A\|_{\infty}$, from \cite[Theorem 1.10(e)]{CFM}  and from the definition of $A$, similarly as above.
\end{proof}

The following theorem establishes that the $L^2$-Alexander invariant of a knot is monomial in $t$ for extremal values of $t$, and that the span between the degrees of the two monomials is equal to twice the genus of the knot. Compare with the monicity property of the Alexander polynomial for fibered knots (see \cite[Proposition 8.33]{BZ}). This theorem is a variant of \cite[Theorem 8.2]{DFL} for knot exteriors, in the sense that the bound $N$ for monomiality is computed from operator norms associated to the monodromy. It is not yet clear which bound is better between the dilatation factor of the monodromy as in \cite[Theorem 8.2]{DFL} or the bound $N$ constructed from operator norms, and we hope that the following proof will be of relevance to answer this question.

\begin{theo} \label{thm L2 alex fibered}
Let $K$ be a fibered knot of genus $g$, and $$P_h = \langle z, a_1, \ldots, a_{2g} |
z a_1 z^{-1} = h_*(a_1), \ldots, 
z a_{2g} z^{-1} = h_*(a_{2g}) \rangle$$ the presentation of its group $G_K$ associated to the fibration.

There exists a real number $N \geqslant 1$ and a representative $(t \mapsto \delta(t))$ of $\Delta_{K}^{(2)}$
such that 
$$\delta(t) = \left \{ \begin{matrix}
1 &  \mathrm{if} \ t<\frac{1}{N}, \\
t^{2g} &  \mathrm{if} \  t>N. \\
\end{matrix} \right .$$
\end{theo}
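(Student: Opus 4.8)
The plan is to compute the Fox matrix of the presentation $P_h$ explicitly and then feed it into Lemma \ref{prop id - tA}. First I would write each relator as $r_i = z a_i z^{-1} h_*(a_i)^{-1}$ and compute its Fox derivatives. Because every $h_*(a_i)$ is a word in the $a_j$ alone, the derivatives with respect to $z$ contribute only to the top row of the Fox matrix, which is exactly the row we discard; and using $\overline{r_i} = 1$ in $G_K$ one finds, for the generators $a_p$,
$$\overline{\frac{\partial r_q}{\partial a_p}} = z\,\delta_{pq} - \overline{\frac{\partial h_*(a_q)}{\partial a_p}}.$$
Since the presentation lists $z$ first, $F_{P,1}$ is precisely the $2g \times 2g$ block indexed by the $a_p$, so $F_{P,1} = z\,I_{2g} - M$, where $M = \left(\overline{\partial h_*(a_q)/\partial a_p}\right)_{p,q}$ is the transpose of the Fox Jacobian of the monodromy.

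Second, I would record the two properties of $M$ needed to invoke the Lemma. Its entries lie in $\Z[H]$, where $H = G_K' \cong \F[a_1,\ldots,a_{2g}]$ is the kernel of $\alpha_K$, because each $a_j$ lies in $H$. Moreover $M \in GL_{2g}(\Z[H])$: since $h_*$ is an automorphism of the free group $\F[a_1,\ldots,a_{2g}]$, both its Fox Jacobian and the transpose $M$ are units of $M_{2g}(\Z[H])$, as can be seen from the Fox chain rule applied to $h_* \circ h_*^{-1} = \mathrm{id}$ together with the canonical involution of $\Z[H]$.

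Third, I would apply $\psi_{K,t}$. As $\alpha_K(z) = 1$ while $\alpha_K$ vanishes on $H$, we obtain $\psi_{K,t}(F_{P,1}) = t\,z\,I_{2g} - M$ and hence $R_{\psi_{K,t}(F_{P,1})} = t\,\widetilde{R_z} - R_M$. Lemma \ref{prop id - tA} then applies verbatim with $W = M$, $A = R_M$ and $n = 2g$: the Fuglede--Kadison determinant equals $1$ for $t \in \left]0; 1/\|R_M^{-1}\|_\infty\right[$ and equals $t^{2g}$ for $t > \|R_M\|_\infty$. Because $g_1 = z$ and $|\alpha_K(z)| - 1 = 0$, the normalising factor $\max(1,t)^{|\alpha_K(g_1)|-1}$ is identically $1$, so this determinant is itself a representative $\delta(t)$ of $\Delta_K^{(2)}$. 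Setting $N = \max\left(1, \|R_M\|_\infty, \|R_M^{-1}\|_\infty\right) \geqslant 1$ yields the two stated regimes, and exhibits $N$ as a monomiality bound built from the operator norms of the Fox Jacobian of the monodromy.

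The routine part is the Fox-calculus bookkeeping and the verification that the normalisation is trivial; the step carrying the real content is the invertibility of $M$ over $\Z[H]$, i.e.\ that the monodromy's Fox Jacobian is a unit of $M_{2g}(\Z[H])$, since this is exactly what lets Lemma \ref{prop id - tA} convert $\|R_M\|_\infty$ and $\|R_M^{-1}\|_\infty$ into the bound $N$. I would also note that these operator norms are finite, as $M$ and $M^{-1}$ have finitely supported entries, and that $N \geqslant 1$ is automatic from $\|R_M\|_\infty\,\|R_M^{-1}\|_\infty \geqslant \|R_M R_M^{-1}\|_\infty = 1$.
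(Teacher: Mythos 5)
Your proof is correct and follows essentially the same route as the paper: the same Fox-calculus computation identifies $R_{\psi_{K,t}(F_{P_h,1})}$ with $t\,\widetilde{R_z}-R_W$ for the monodromy Fox Jacobian $W$, invertible over $\Z[H]$ because $h_*$ is a free-group automorphism (the paper cites Birman for this key input), and Lemma \ref{prop id - tA} together with the trivial normalisation $|\alpha_K(z)|-1=0$ then gives both monomial regimes. The only difference is the final bound: you take $N=\max\left(1,\|R_W\|_\infty,\|R_W^{-1}\|_\infty\right)$, whereas the paper invokes the symmetry $\Delta_K^{(2)}(t)\ \dot{=}\ \Delta_K^{(2)}(1/t)$ (Proposition \ref{prop dual}) to sharpen this to $N=\min\left(\|R_W\|_\infty,\|R_W^{-1}\|_\infty\right)$ --- immaterial for the statement as written, but a better bound on the monomiality limit $\lambda_K$ used later.
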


The smallest such $N$ will be denoted $\lambda_{\delta}$ or $\lambda_K$ and will be called the \textit{monomiality limit of the function $\delta$}, or the \textit{monomiality limit of the knot $K$}.

\begin{exemple}
From Proposition \ref{prop iter}, if $K$ is an iterated torus knot, then $\lambda_K =1$.
\end{exemple}

\begin{remark}\label{rem cont}
Since the $L^2$-Alexander invariant is continuous (see \cite[Theorem 1.2 (1)]{Liu}), if $K$ is a fibered knot with $\Delta_{K}^{(2)}(1) > 1$ (ie $\mathrm{vol}(K)>0$), then $\lambda_K >1$.
\end{remark}

\begin{exemple} \label{ex 41 fibered}
If $K = 4_1$ is the figure-eight knot, then there exists a $T>1$ such that
$$\Delta_K^{(2)}(t) = \left \{ \begin{matrix}
1 &  \mathrm{if} \ t<\frac{1}{T}, \\
 \exp\left (\dfrac{\mathrm{vol}(4_1)}{6 \pi}\right ) \approx 1.113 & \mathrm{if} \  t=1, \\
t^2 &  \mathrm{if} \  t>T. \\
\end{matrix} \right .$$
At the time of writing, the smallest known $T$ satisfying the above is $$T = \dfrac{3 + \sqrt{5}}{2} \approx 2.618,$$
the dilatation factor associated to the monodromy.
Thus  $\lambda_{4_1} \in ]1; 2.618..]$.
\end{exemple}

\begin{proof}
Let $K$ be a fibered knot of genus $g$, $\Sigma$ the associated fibre, a once-punctured surface of genus $g$. The group $\pi_1(\Sigma)$ is a free group on $2g$ elements $a_1, \ldots a_{2g}$, and $G_K$ has the presentation 
$P_h = \langle z, a_1, \ldots, a_{2g} | z a_i z^{-1} = W_i(a_j) \rangle$
where $W_i(a_j) = h_*(a_j)$ is a word in the letters $a_j$.

The abelianization $\alpha_K\colon G_{K} \to \Z$ sends $z$ to $1$ and the $a_i$ to $0$.

Since the presentation $P_h$ is of deficiency one, we can compute the $L^2$-Alexander invariant of $K$ from the Fox matrix $F_{P_h}$.

The Fox matrix $F_{P_h}$ is written
$$F_{P_h} =
\begin{pmatrix} 
1 - z a_1 z^{-1} & 1 - z a_2 z^{-1} &  \ldots & 1 - z a_{n} z^{-1} \\
z - w_{1,1} & -w_{1,2} & \ldots & -w_{1,n} \\
 -w_{2,1} & z - w_{2,2} & \ldots & -w_{2,n} \\
\vdots & \vdots & \ddots & \vdots \\
-w_{n,1} & -w_{n,2} & \ldots & z - w_{n,n} 
 \end{pmatrix},$$
 where $n=2g$ and $w_{i,j} = \dfrac{\partial W_j(a_1, \ldots,a_{2g})}{\partial a_i} \in \Z[G_{K}]$ is a linear combination of words on the generators $a_1, \ldots, a_{2g}$; these words are all sent to zero by the abelianization $\alpha_K$.

Let $W$ denote the jacobian matrix
$$W = \begin{pmatrix}
w_{1,1} & \ldots & w_{1,n} \\
\vdots & \ddots &\vdots \\
w_{n,1} & \ldots & w_{n,n}
\end{pmatrix} = 
\left (\dfrac{\partial(h_*(a_j))}{\partial a_i}\right )_{1 \leqslant i,j \leqslant n}
$$
and $A = R_{\psi_{K,t}(W)}$ the associated operator.


For all $t>0$, the operator $R_{\psi_{K,t}(F_{P_h,1})}$ is of the form
$$R_{\psi_{K,t}(F_{P_h,1})} = 
t \widetilde{R_z} - A
=\begin{pmatrix} 
t R_z - A_{1,1} & -A_{1,2} & \ldots & -A_{1,n} \\
 -A_{2,1} & t R_z - A_{2,2} & \ldots & -A_{2,n} \\
\vdots & \vdots & \ddots & \vdots \\
-A_{n,1} & -A_{n,2} & \ldots & t R_{z} - A_{n,n}
 \end{pmatrix},$$ where $n=2g$,  $A_{i,j} =
 R_{\psi_{K,t}(w_{i,j})}$ for all $i,j$, and $\widetilde{R_z}$ is the diagonal operator with $R_z$ at each coefficient. 

Since $h_*\colon \F[a_1, \ldots, a_n] \to \F[a_1, \ldots, a_n]$ is an isomorphism, $(h_*(a_1), \ldots, h_*(a_n))$ is a basis for the free group
$\F[a_1, \ldots, a_n]$ and it follows from \cite{birman} that 
the jacobian matrix 
$$W = 
\left (\dfrac{\partial(h_*(a_j))}{\partial a_i}\right )_{1 \leqslant i,j \leqslant n}$$ is invertible over $\Z[\F[a_1, \ldots, a_n]]$.
It follows from Lemma \ref{prop id - tA} 
 that if \\
 $t \in ]0;\frac{1}{\|A^{-1}\|_{\infty}}[ \cup ]\|A\|_{\infty};+ \infty[$, then the operator $R_{\psi_{K,t}(F_{P_h,1})} = t \widetilde{R_z} - A$ is invertible and

\begin{enumerate}
\item if $t \in ]0;\frac{1}{\|A^{-1}\|_{\infty}}[$, then 
$$ \det{}_{\NN(G)}\left (R_{\psi_{K,t}(F_{P_h,1})} \right ) = 1,$$
\item if $t>\|A\|_{\infty}$, then 
$$ \det{}_{\NN(G)}\left (R_{\psi_{K,t}(F_{P_h,1})} \right ) = t^n.$$
\end{enumerate}

Let $N = \min\left (\|A\|_{\infty},\|A^{-1}\|_{\infty}\right )$.
It follows  from Proposition \ref{prop dual}
 that  for all $t \in ]0;\frac{1}{N}[ \cup ]N;+ \infty[$:
$$\Delta_K^{(2)}(t) \ \dot{=} \ 
\dfrac{\det{}_{\NN(G)}\left (R_{\psi_{K,t}(F_{P_h,1})}\right )}{\max(1,t)^{|\alpha_K(z)|-1}} = 
\det{}_{\NN(G)}\left (R_{\psi_{K,t}(F_{P_h,1})}\right ) =
\left \{ \begin{matrix}
1 &  \mathrm{if} \ t<\frac{1}{N}, \\
t^{2g} &  \mathrm{if} \  t>N. \\
\end{matrix} \right .$$

%
\end{proof}
 

\begin{remark}
Consider $K$ an hyperbolic fibered knot, with associated pseudo-Anosov monodromy $h$. Let us denote $\mathrm{ent}(h)$ its entropy, i.e. the logarithm of the dilatation factor (see \cite{FLP} for details). Let $f$ be the representative of $\Delta_{K}^{(2)}$ such that $f(t) = 1$ for $t \in ]0;\lambda_K^{-1}[$ and $f(t)=t^{2 g(K)}$ for $t \in ]\lambda_K; + \infty[$.

From \cite[Theorem 1.2]{Liu}, $f$ is multiplicatively convex and thus $$\exp\left (\dfrac{\mathrm{vol}(M_K)}{6 \pi}
\right )=
f(1) \leqslant \sqrt{f(\lambda_K^{-1})f(\lambda_K)} = 
\sqrt{\lambda_K^{2 g(K)}} = \lambda_K ^{g(K)}$$
by Theorem \ref{thm L2 volume}.

Hence $$ \ln (\lambda_K) \geqslant \dfrac{\mathrm{vol}(M_K)}{(3\pi) (2 g(K))}$$
which, combined with the fact that $\mathrm{ent}(h) \geqslant \ln(\lambda_K)$ (from \cite[Theorem 8.2]{DFL}), gives us a slightly weaker version of Kojima-Macshane's inequality 
$$ \mathrm{ent}(h) \geqslant \dfrac{\mathrm{vol}(M_K)}{(3\pi) (2 g(K)- 1)}$$
(see \cite[Theorem 1]{kojima}).

It is quite remarkable to find such similar inequalities obtained with such different methods.

Furthermore, note that finding that $2 g(K) \ln (\lambda_K)$ is actually smaller than \\
$(2 g(K) -1) \mathrm{ent}(h)$ would give a stronger upper bound on the volume than with \cite[Theorem 1]{kojima}.

Finally, note that since $\lambda_K \leqslant \min\left (\|A\|_{\infty},\|A^{-1}\|_{\infty}\right )$ where $A$ is the Fox jacobian of the monodromy $h$ (see Theorem \ref{thm L2 alex fibered} above), the multiplicative convexity gives us a lower bound on the operator norm $\|A\|_{\infty}$ depending on the volume.
\end{remark}

\subsection{The $L^2$-Alexander invariant detects the figure-eight knot}

\begin{figure}[!h]
\centering
\includegraphics[scale=0.2]{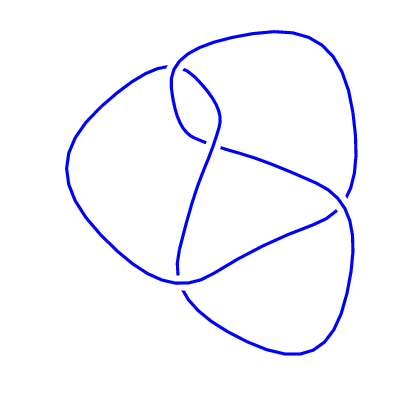}
\caption{The figure-eight knot $4_1$} \label{figure 41}
\end{figure}

We can now prove that the $L^2$-Alexander invariant detects the figure-eight knot $4_1$.

\begin{theo}\label{thm 41}
Let $K$ be a knot in $S^3$ such that its $L^2$-Alexander invariant $\Delta_K^{(2)}$ is of the form:
$$\Delta_K^{(2)}(t) \ \dot{=} \ \left \{ \begin{matrix}
1 &  \mathrm{if} \ t<\frac{1}{T} \\
 \exp\left (\dfrac{\mathrm{vol}(4_1)}{6 \pi}\right ) \approx 1.113 & \mathrm{if} \  t=1 \\
t^2 &  \mathrm{if} \  t>T \\
\end{matrix} \right .$$
for a certain $T \in [1,+ \infty[$.
Then $K$ is the figure-eight knot $4_1$.
\end{theo}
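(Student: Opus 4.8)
The plan is to convert the prescribed shape of $\Delta_K^{(2)}$ into two numerical invariants of $K$ — its simplicial volume and its genus — and then to combine a JSJ analysis with the known lower bounds for cusped hyperbolic volumes to force $M_K$ to be the figure-eight complement. First I would read these two invariants off. Evaluating the given representative at $t=1$ gives $\Delta_K^{(2)}(1) = \exp(\mathrm{vol}(4_1)/6\pi)$, so Theorem \ref{thm L2 volume} yields $\mathrm{vol}(K) = \mathrm{vol}(4_1) = 2V_3 = 2.029\ldots$. The extremal behaviour $\Delta_K^{(2)}(t)\ \dot{=}\ 1$ as $t\to 0^+$ and $\Delta_K^{(2)}(t)\ \dot{=}\ t^2$ as $t\to +\infty$ displays a span of degree $2$ between the two extremal monomials; by the genus detection of Friedl–Lück and Liu \cite{FL,Liu}, valid for every knot, this span equals $2g(K)$, so $g(K)=1$.

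Next I would study $JSJ(M_K)$, whose hyperbolic pieces have volumes summing to $\mathrm{vol}(K)=2.029\ldots$. By the results of Cao–Meyerhoff, Adams and Agol \cite{cao,adams1988volumes,agol2010minimal}, every orientable cusped hyperbolic $3$-manifold has volume at least $2V_3=2.029\ldots$, with equality attained only by $M_{4_1}$ and its sibling (both one-cusped), and any such manifold with at least two cusps has strictly larger volume. Since $\mathrm{vol}(K)>0$ there is at least one hyperbolic piece, and the equality of the sum with $2.029\ldots$ then forces exactly one such piece, of volume $2.029\ldots$ and with a single cusp, hence equal to $M_{4_1}$ or its sibling. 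Its unique boundary torus is either $\partial M_K$ or an internal JSJ-torus. In the first case, $M_K$ being connected, the hyperbolic piece is all of $M_K$; the sibling is not the complement of a knot in $S^3$, so $M_K = M_{4_1}$ and, a knot being determined by its complement and $4_1$ being amphichiral, $K = 4_1$.

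In the remaining case the cusp is glued internally to a Seifert piece, so Budney's description of Seifert pieces inside knot exteriors \cite{Bud} shows that $K$ is obtained from $4_1$ — the companion carried by the hyperbolic piece — through a nonempty sequence of cablings and connected sums with iterated torus knots. By Propositions \ref{prop genus sum} and \ref{prop genus cable} each such nontrivial operation strictly raises the genus above $g(4_1)=1$, so $g(K)\geq 2$, contradicting $g(K)=1$; this case therefore cannot occur and $K=4_1$. As each of these operations also preserves fiberedness, one could instead deduce $g(K)=1$ here from the fibered statement of Theorem \ref{thm L2 alex fibered}, matching the route announced in the introduction. I expect the main obstacle to be the volume bookkeeping of the middle step: proving that the unique hyperbolic piece is one-cusped and equal to $M_{4_1}$, excluding the sibling as an $S^3$-knot complement, and correctly translating the JSJ graph into cabling and connected-sum operations through \cite{Bud}.
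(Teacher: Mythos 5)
Your proposal is correct, and its core geometric engine is exactly the paper's: volume read off via Theorem \ref{thm L2 volume}, the Cao--Meyerhoff/Adams/Agol bounds forcing the JSJ decomposition of $M_K$ to contain a single one-cusped hyperbolic piece of volume $2.029\ldots$ equal to $M_{4_1}$ or its sibling, and Budney's results \cite{Bud} converting the remaining Seifert pieces into cablings and connected sums with iterated torus knots. Where you genuinely diverge is in how the genus enters. The paper never uses Theorem \ref{thm genus} here: it observes that $4_1$, iterated torus knots, connected sums and cablings all preserve fiberedness, so $K$ is fibered; then the fibered monomiality statement (Theorem \ref{thm L2 alex fibered}) applied to the assumed shape of $\Delta_K^{(2)}$ gives $g(K)=1$, and the classification of fibered genus-one knots (\cite[Proposition 5.14]{BZ}) finishes, the trefoils being excluded by their vanishing volume. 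You instead extract $g(K)=1$ at the outset from the general genus-detection theorem of Friedl--L\"uck and Liu (Theorem \ref{thm genus}) and close with the strict growth of genus under nontrivial cabling and connected sum (Propositions \ref{prop genus sum} and \ref{prop genus cable}) --- which is precisely the endgame the paper reserves for $5_2$ in Theorem \ref{thm 52}. Both routes are valid; indeed the paper remarks after its proof that it deliberately kept the fibered argument (found before Theorem \ref{thm genus} was available) because it may generalize to fibered situations where Theorem \ref{thm genus} does not apply, whereas your version is shorter and avoids the fiberedness discussion and the genus-one fibered classification altogether.

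Two points should be tightened. First, you assert without justification that the sibling of $M_{4_1}$ is not a knot exterior; the paper's reason is the homology computation $H_1(M';\Z)=\Z\oplus\Z/5\Z\neq\Z$. Note also that this exclusion is needed in your second case, not only the first: an internal one-torus JSJ piece of $M_K$ is still a compact submanifold of $S^3$ bounded by a torus, hence a knot exterior by \cite[Proposition 2.2]{Bud}, and that is how one identifies the companion as $4_1$ before invoking \cite[Theorem 4.18]{Bud}. Second, when you say each ``nontrivial'' operation strictly raises the genus, you should make explicit (as the paper does in the proof of Theorem \ref{thm 52}) that the exceptions are exactly the $(\pm 1,m)$-cablings and sums with the unknot, and that these contribute no Seifert piece to the JSJ decomposition; since your second case has at least one Seifert piece, some genuinely genus-increasing operation occurs, giving $g(K)\geqslant 2$ and the desired contradiction.
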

\begin{proof}
Let $K$ be a knot satisfying the assumptions of the theorem. It follows from Theorem \ref{thm L2 volume} that $\mathrm{vol}(M_K) = \mathrm{vol}(4_1)$.

The JSJ-decomposition of $M_K$ contains Seifert-fibered and hyperbolic pieces; these pieces are all sub-manifolds of $S^3$ whose boundary are non-empty finite unions of tori.

A compact hyperbolic $3$-manifold with toroidal boundary and at least three boundary components has volume at least three times the volume of a regular ideal terahedron (see \cite{adams1988volumes}), thus it has volume  greater than $3$. Moreover, a compact hyperbolic $3$-manifold with toroidal boundary and two boundary components has volume at least $3.66...$ (the volume of the Whitehead link) by \cite{agol2010minimal}.
Since the simplicial volume $\mathrm{vol}(M_K)$ of $M_K$ is equal to $\mathrm{vol}(4_1) = 2.029...$, it is smaller than $3$ and since $\mathrm{vol}(M_K)$ is equal to the sum of the simplicial volumes of the JSJ-pieces of $M_K$, we conclude that all the hyperbolic pieces in the JSJ decomposition of $M_K$ have exactly one boundary component (which is a torus).

A compact hyperbolic $3$-manifold with one toroidal boundary component has volume at least $\mathrm{vol}(4_1) = 2.029...$; among these manifolds, only the exterior of the figure-eight knot $M_{4_1}$ and its sibling $M'$ (which can be described as the $(-5/1)$-Dehn filling on the Whitehead link)
 have volume equal to this number (see \cite{cao}).

This implies that the JSJ decomposition of $M_K$ has exactly one hyperbolic piece $N$, which is homeomorphic to $M_{4_1}$ or $M'$; the other pieces are Seifert-fibered manifolds that we  denote by $S_j$.

The manifold $N$ is a  compact sub-manifold of $S^3$ with boundary a single torus, thus $N$ is the exterior $M_{K'}$ of a knot $K'$ (see for example \cite[Proposition 2.2]{Bud}). Since the first homology group of the manifold $M'$ is $$H_1(M'; \Z) = \Z \oplus \Z/5\Z,$$
the manifold $M'$ cannot be the exterior of a knot in $S^3$ and therefore $N = M_{4_1}$.

Since the manifold $M_K$ has a JSJ decomposition composed of $M_{4_1}$ and Seifert-fibered manifolds $S_j$,
it follows from \cite[Theorem 4.18]{Bud} that 
$K$ is obtained from $4_1$ by a finite number of \begin{itemize}
\item cablings,
\item connected sums with an iterated torus knot.
\end{itemize}
The knot $4_1$ is fibered, all iterated torus knots are fibered, the connected sum of two fibered knots is fibered and all cablings of a fibered knot are fibered (see for example \cite[p. 326]{Rol} and \cite{stallings}). Thus the knot $K$ is fibered.

It follows from Theorem \ref{thm L2 alex fibered} and the assumptions on $K$ that $g(K) = 1$. Since the only hyperbolic fibered knot of genus $1$ is $4_1$ (see \cite[Proposition 5.14]{BZ}) we conclude that $K$ is the figure-eight knot $4_1$.

\end{proof}

Note that this proof was found before the following Theorem \ref{thm genus}, this is why Theorem \ref{thm L2 alex fibered} is used to detect the value of the genus at the end of the proof. We chose to leave the proof like this since the reasoning may be useful for possible generalisations about \textit{fibered} manifolds where Theorem \ref{thm genus} may not apply.

\section{Detecting the twist knot $5_2$}\label{sec:52}

\begin{figure}[!h]
\centering
\includegraphics[scale=0.2]{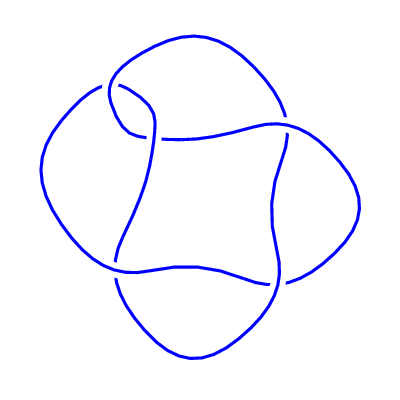}
\caption{The non-fibered twist knot $5_2$} \label{figure 52}
\end{figure}

It has been recently proven that the $L^2$-Alexander invariant of a knot always detects the genus, not only for iterated torus knots (Proposition \ref{prop iter}) or for fibered knots (Theorem \ref{thm L2 alex fibered}). More precisely:

\begin{theo}(\cite{FL},\cite[Theorem 1.2]{Liu})\label{thm genus}

Let $K$ be a knot in $S^3$, then any representative $(t \mapsto f(t))$ of its $L^2$-Alexander invariant $\Delta_K^{(2)}$ satisfies :
\begin{itemize}
\item $f(t) \sim_{t \to 0^+} C t^{m} $,
\item $f(t) \sim_{t \to +\infty} C t^{m + 2 g(K)} $,
\end{itemize}
where $C \in \left [1, \exp\left (\dfrac{\mathrm{vol}(K)}{6 \pi}\right )\right ]$ depends only on the knot, and $m \in \Z$.
In particular, the $L^2$-Alexander invariant of a knot detects its genus.
\end{theo}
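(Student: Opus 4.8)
The plan is to identify $\Delta_K^{(2)}$ with a specialization of the $L^2$-Alexander torsion $\tau^{(2)}(M_K,\alpha_K)$ and to reduce the detection of the genus to an equality between the \emph{degree} of this torsion function and the Thurston norm of the class $\alpha_K$. First I would fix a representative $f$ and define its degree as the difference of its asymptotic logarithmic slopes, $\deg(f) = \limsup_{t\to+\infty}\frac{\ln f(t)}{\ln t} - \liminf_{t\to 0^+}\frac{\ln f(t)}{\ln t}$. The symmetry of Proposition \ref{prop dual}, $f \ \dot{=} \ (t\mapsto f(1/t))$, immediately ties the behaviour at $0^+$ to the behaviour at $+\infty$: if $f(t)\sim Ct^{m+d}$ as $t\to+\infty$ then $f(t)\sim C't^{m}$ as $t\to 0^+$ with the \emph{same} leading constant $C=C'$. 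Thus it suffices to establish the asymptotic at one end together with the value $d=\deg(f)$, and one recovers both bullet points of Theorem \ref{thm genus} at once with a common $C$.

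To upgrade a mere ``bounded between two powers of $t$'' estimate into genuine asymptotic equivalences $\sim Ct^m$, I would invoke that $f$ is continuous and multiplicatively convex (\cite[Theorem 1.2]{Liu}): convexity forces the logarithmic slope $\frac{\ln f(t)}{\ln t}$ to be monotone and hence to converge to integers at each end, which produces the leading coefficient $C$ and pins it down in $[1,\exp(\mathrm{vol}(K)/6\pi)]$ via Theorem \ref{thm L2 volume} together with the convexity inequality $f(1)\le\sqrt{f(s^{-1})f(s)}$.

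The heart of the argument is the equality $\deg(f)=2g(K)$, which I would obtain as $\deg(f)=x_{M_K}(\alpha_K)+1$, the $+1$ accounting for the single boundary torus and for the normalization by $\max(1,t)^{|\alpha_K(g_1)|-1}$, and $x_{M_K}(\alpha_K)=2g(K)-1$ being the Thurston norm. For the \emph{upper} bound I would generalize the fibered computation of Theorem \ref{thm L2 alex fibered}: there the Fox Jacobian $W$ of the monodromy is genuinely invertible over $\Z[\pi_1(\Sigma)]$, and Lemma \ref{prop id - tA} makes the determinant exactly monomial outside $[1/\|W^{-1}\|_\infty,\|W\|_\infty]$, giving span $2g$. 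In the non-fibered case there is no monodromy, so instead I would build a handle decomposition of $M_K$ of minimal complexity adapted to a Thurston-norm-minimizing Seifert surface and bound the operator norms of the resulting square sub-Jacobian, thereby controlling the exponents of $t$ in the determinant and yielding $\deg(f)\le x_{M_K}(\alpha_K)+1$. For the \emph{lower} bound I would cut $M_K$ along a minimal-genus Seifert surface $\Sigma$, use the resulting sutured decomposition together with the multiplicativity of the Fuglede--Kadison determinant (Proposition \ref{prop operations det}) and the Wang sequence for the infinite cyclic cover, and estimate the growth of $\det_{\Ng}(R_{\psi_{K,t}(F_{P,1})})$ as $t\to+\infty$ from below by $-\chi(\Sigma)+1=2g(K)$.

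The main obstacle is exactly this last lower bound in the non-fibered setting. For fibered knots the invertibility of the monodromy Jacobian does all the work; without it one must run the torsion through a sutured-manifold hierarchy and control each gluing, which requires the determinant-class property ($\det_{\Ng}\ge 1$) for the groups involved and a careful sofic/approximation argument to prevent the determinants from collapsing. Controlling the leading coefficient $C$ simultaneously with the span—rather than merely the span alone—is the delicate point, and this is precisely where the multiplicative convexity and the fine asymptotic analysis of Liu become indispensable. Once $\deg(f)=2g(K)$ is established, detection is immediate: distinct genera give distinct spans $2g(K)$, so $\Delta_K^{(2)}$ determines $g(K)$.
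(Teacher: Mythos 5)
First, a point of comparison: the paper does not prove Theorem \ref{thm genus} at all. It is quoted from \cite{FL} and \cite[Theorem 1.2]{Liu} and used as a black box in Section \ref{sec:52} and Section \ref{sec:inf}. So your proposal is not competing with an internal argument of the paper but attempting to reconstruct two long external papers, and it must be judged on whether it could actually do so. It cannot, for two concrete reasons. The first is that your step upgrading ``bounded between powers of $t$'' to genuine asymptotics is both circular and invalid. You invoke continuity and multiplicative convexity from \cite[Theorem 1.2]{Liu} --- i.e.\ from the very theorem being proven --- and then assert that convexity forces the logarithmic slope to converge to integers and ``produces the leading coefficient $C$''. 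Convexity of $\ln f$ as a function of $s=\ln t$ makes the slope monotone, hence convergent (possibly to $\pm\infty$), but it does not make $f(t)/t^m$ converge: the convex function $\ln f(s) = bs - \sqrt{s+1}$ has slope tending to $b$ while $f(t)/t^{b}\to 0$, so no leading coefficient exists. The existence of the asymptotics $f\sim Ct^m$, the integrality of $m$, and the pinning of $C$ are part of the hard analytic content of Liu's theorem, not consequences of convexity plus the symmetry of Proposition \ref{prop dual} (that symmetry does correctly force equal leading constants at both ends, \emph{once} those constants are known to exist).

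The second, more fundamental gap is the lower bound $\deg(f)\geqslant 2g(K)$ for non-fibered knots, which you yourself identify as ``the main obstacle'' but then address with a sketch that does not work. Cutting along a minimal-genus Seifert surface and running a sutured hierarchy with the Wang sequence and the multiplicativity of the Fuglede--Kadison determinant (Proposition \ref{prop operations det}) is not how either \cite{FL} or \cite{Liu} proceeds, and there is no known way to carry it out: Fuglede--Kadison determinants admit no usable gluing estimates along a non-fibered cutting surface, and preventing the determinants from degenerating through the hierarchy is precisely the unsolved difficulty. What the actual proofs use at this point is Agol's virtual fibering theorem (via the RFRS property, resting on the Agol--Wise machinery): one passes to a finite cover of $M_K$ that fibers over the circle, applies the fibered computation there (in spirit your Theorem \ref{thm L2 alex fibered} and Lemma \ref{prop id - tA}), and transfers the degree bound back down via an approximation argument for the corresponding finite-index subgroups. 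This deep input is entirely absent from your outline; without it, the heart of the argument is missing. Your upper bound sketch (a complexity-controlled presentation adapted to a norm-minimizing surface, giving $\deg(f)\leqslant x_{M_K}(\alpha_K)+1$) is, by contrast, roughly the right idea and close in spirit to the published arguments.
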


We can now prove that the pair (genus, volume), and thus the $L^2$-Alexander invariant, detects the knot $5_2$ (even if we  know the exact value of $\Delta_{5_2}^{(2)}(t)$ only for $t=1$ !).

\begin{theo}\label{thm 52}
Let $K$ be a knot in $S^3$ such that its $L^2$-Alexander invariant $\Delta_K^{(2)}$ has a representative $f(t)$ satisfying :
\begin{itemize}
\item $f(t) \sim_{t \to 0^+} C $
\item $f(1) =  \exp\left (\dfrac{\mathrm{vol}(5_2)}{6 \pi}\right ) \approx 1.162 $
\item $f(t) \sim_{t \to +\infty} C t^2  $
\end{itemize}
for a certain $C \in [1,1.162[$.
Then $K$ is the knot $5_2$.
\end{theo}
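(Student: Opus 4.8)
The plan is to mirror the structure of the proof of Theorem \ref{thm 41}, replacing the fibered-genus detection (Theorem \ref{thm L2 alex fibered}) by the general genus detection (Theorem \ref{thm genus}) and replacing the volume bounds of Cao-Meyerhoff by the small-volume census of Gabai-Meyerhoff-Milley. First I would note that the hypotheses give $f(t) \sim C$ as $t \to 0^+$ and $f(t) \sim C t^2$ as $t \to +\infty$, so by Theorem \ref{thm genus} the knot $K$ has genus $g(K) = 1$, and that $f(1) = \exp(\mathrm{vol}(5_2)/(6\pi))$ forces $\mathrm{vol}(M_K) = \mathrm{vol}(5_2) = 2.828...$ via Theorem \ref{thm L2 volume}.

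Next I would analyse the JSJ-decomposition of $M_K$ exactly as in the proof of Theorem \ref{thm 41}. Since $\mathrm{vol}(M_K) = 2.828... < 3$, the bounds of Adams \cite{adams1988volumes} and Agol \cite{agol2010minimal} force every hyperbolic JSJ-piece to have exactly one toroidal boundary component. The key input here is the small-volume classification: by Gabai-Meyerhoff-Milley \cite{GMM}, the cusped hyperbolic $3$-manifolds of smallest volume are explicitly enumerated, and a one-cusped hyperbolic $3$-manifold of volume $2.828...$ must be on their short list; I would identify that the only such manifold arising as a knot exterior with the correct volume is $M_{5_2}$ (ruling out the non-knot-exterior candidates by an $H_1$-obstruction as in the $M'$ case of Theorem \ref{thm 41}, i.e. any other candidate either has the wrong first homology to be a knot exterior in $S^3$ or the wrong volume). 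This shows $JSJ(M_K)$ has a single hyperbolic piece homeomorphic to $M_{5_2}$, the remaining pieces being Seifert-fibered.

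Then, invoking Budney's \cite[Theorem 4.18]{Bud} characterisation of how Seifert pieces correspond to connected sums and cablings, I would conclude that $K$ is obtained from $5_2$ by a finite sequence of cablings and connected sums with iterated torus knots. The crucial final step is a genus argument: by Proposition \ref{prop genus sum} and Proposition \ref{prop genus cable}, each nontrivial cabling or connected sum with a nontrivial iterated torus knot strictly increases the genus, while $g(5_2) = 1 = g(K)$. Hence no such operation can occur, and $K = 5_2$ (up to mirror image, since $5_2$ is chiral and the $L^2$-Alexander invariant cannot distinguish $K$ from $K^*$).

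I expect the main obstacle to be the volume-census step. Unlike the figure-eight case, where Cao-Meyerhoff \cite{cao} gives a clean "smallest one-cusped volume" statement with only two manifolds realising it, the value $2.828...$ is not the absolute minimum, so one must appeal to the finer enumeration in \cite{GMM} and carefully check that $M_{5_2}$ is the unique one-cusped hyperbolic manifold of that volume which can be a knot exterior. Verifying that the competing candidates of the same volume are excluded — either because they have more than one cusp (already handled by the volume-additivity argument), or because their first homology is incompatible with being the complement of a knot in $S^3$ — is where the argument requires the most care, and is the analogue of the $H_1(M';\Z) = \Z \oplus \Z/5\Z$ computation used to eliminate the sibling manifold in Theorem \ref{thm 41}.
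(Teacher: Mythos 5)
Your overall skeleton matches the paper's proof (volume via Theorem \ref{thm L2 volume}, JSJ analysis as in Theorem \ref{thm 41}, Budney's \cite[Theorem 4.18]{Bud}, then a genus argument via Theorem \ref{thm genus}), but there is a genuine gap at the census step, and it is precisely the step you flagged as delicate. Your proposed resolution does not work: the Gabai--Meyerhoff--Milley list \cite{GMM} contains \emph{three} one-cusped manifolds of volume $2.828...$, namely $m015$, $m016$ and $m017$, and only $m017$ is killed by the $H_1$-obstruction (its first homology is $\Z \oplus \Z/7\Z$). The manifold $m016$ is itself a knot exterior in $S^3$ --- it is the exterior of the knot $K12n242$ --- so it has $H_1 \cong \Z$, a single cusp, and exactly the same volume as $M_{5_2}$. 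Neither of your two exclusion criteria (``more than one cusp'' or ``first homology incompatible with a knot exterior'') applies to it, so your claim that $M_{5_2}$ is the only knot exterior of volume $2.828...$ on the list is false, and the identification of the hyperbolic JSJ-piece with $M_{5_2}$ cannot be made at this stage of the argument.

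The repair is to carry \emph{both} knot-exterior candidates forward, which is what the paper does: Budney's theorem gives that $K$ is obtained from a knot $K' \in \{5_2, K12n242\}$ by cablings and connected sums with iterated torus knots, and these operations never decrease the genus (strictly increasing it unless trivial, by Propositions \ref{prop genus sum} and \ref{prop genus cable}). Since Theorem \ref{thm genus} and your hypotheses give $g(K)=1$, while $g(K12n242)=5$, the candidate $K'=K12n242$ is excluded, forcing $K'=5_2$ and then (again by the genus count) $K=5_2$. In other words, the genus information extracted from $\Delta_K^{(2)}$ is needed twice: not only to rule out nontrivial cablings and connected sums, as in your final step, but also to discriminate between the two distinct knot exteriors sharing the volume $2.828...$; your proposal uses it only for the former purpose.
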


\begin{remark}
We can actually prove that $C=1$ for the knot $5_2$ and for all other twist knots, using a particular group presentation (see Remark 3.10 and Section B.2.4 in \cite{BAthesis}). Various computations have shown $C$ to be equal to $1$ for several other knots. We are currently studying the behaviour of this constant $C$ in more detail.
\end{remark}

\begin{proof}
Let $K$ be a knot satisfying the assumptions of the theorem. It follows from Theorem \ref{thm L2 volume} that $\mathrm{vol}(M_K) = \mathrm{vol}(5_2)$.

The JSJ-decomposition of $M_K$ contains Seifert-fibered and hyperbolic pieces; these pieces are all sub-manifolds of $S^3$ whose boundary are non-empty finite unions of tori.
From there we can use the same arguments as in the proof of Theorem \ref{thm 41} to conclude
%
%
that the JSJ decomposition of $M_K$ has exactly one hyperbolic piece $N$, which has hyperbolic volume $2.828$; the other pieces are Seifert-fibered manifolds that we  denote by $S_j$.

By \cite{GMM}, $N$ is thus one of the three hyperbolic manifolds $m015$ (which is the exterior of the knot $5_2$), $m016$ (which is the exterior of the knot $K12n242$), and $m017$.
The manifold $N$ is a  compact sub-manifold of $S^3$ with boundary a single torus, thus $N$ is the exterior $M_{K'}$ of a knot $K'$ (see for example \cite[Proposition 2.2]{Bud}).
Since the first homology group of the manifold $m017$ is $$H_1(m017; \Z) = \Z \oplus \Z/7\Z,$$
the manifold $m017$ cannot be the exterior of a knot in $S^3$ and therefore $K'$ is either $5_2$ or $K12n242$.

Since the manifold $M_K$ has a JSJ decomposition composed of $M_{K'}$ and Seifert-fibered manifolds $S_j$,
it follows from \cite[Theorem 4.18]{Bud} that 
$K$ is obtained from $K'$ by a finite number of \begin{itemize}
\item cablings,
\item connected sums with an iterated torus knot.
\end{itemize}
All these operations strictly increase the genus of the knot (see Propositions \ref{prop genus sum} and \ref{prop genus cable}), except for the trivial cablings (of the type $(\pm 1, m)$) and for connected sums with the trivial knot.
From the assumptions of the theorem and Theorem \ref{thm genus}, $K$ is of genus one. Since $5_2$ is of genus one and $K12n242$ is of genus $5$ (see the website \textit{Knotinfo} for example), the only possible $K$ is $5_2$.
\end{proof}

\begin{remark}
Note that $K12n242$ is fibered and $5_2$ is not, but we cannot yet distinguish for example $K12n242$ from $5_2 \sharp T_{2,9}$, since they both have simplicial volume $2.828...$ and genus $5$ (and the leading coefficients $C$ of their $L^2$-Alexander invariants are both equal to $1$). Knowing if non-fibered knots can have an $L^2$-Alexander invariant of the form of Theorem \ref{thm L2 alex fibered}
(i.e. if a non-fibered knot can have a finite monomiality limit)
 would help distinguish such knots.
\end{remark}

\section{An infinite family of pairs of knots distinguished by the $L^2$-Alexander invariant and not by the pair (genus,volume)} \label{sec:inf}

In the four previous examples ($U$, $3_1$, $4_1$ and $5_2$), we proved that the $L^2$-Alexander invariant detects each of these knots, and the proofs could be roughly separated in two steps :
\begin{enumerate}
\item The $L^2$-Alexander invariant contains the simplicial volume (Theorem \ref{thm L2 volume}) and the genus (Proposition \ref{prop iter}, Theorem \ref{thm L2 alex fibered}, Theorem \ref{thm genus}).
\item The pair (simplicial volume, genus) detects each of the knots $U$, $3_1$, $4_1$ and $5_2$.
\end{enumerate}
The $L^2$-Alexander invariant being a \textit{continuous} function (thanks to \cite[Theorem 1.2]{Liu}) containing both the genus and the volume associated to a knot makes it already a powerful and interesting object.
However, we will now prove that the $L^2$-Alexander invariant contains \textit{strictly more information} than the pair (genus, volume), which is illustrated by further knot detection properties, as explained in the following theorem.

\begin{theo}\label{thm inf}
There exists two infinite families of fibered knots $(J_n)_{n  \in \N}$ and $(K_n)_{n  \in \N}$ such that for all $n \in \N$ :
\begin{enumerate}
\item $g(J_n) = g(K_n)$,
\item $\mathrm{vol}(J_n) = \mathrm{vol}(K_n)$,
\item $\Delta_{J_n}^{(2)} \neq \Delta_{K_n}^{(2)}$,
\item $K_n$ is the only knot (up to mirror image and reversion, as always) with $L^2$-Alexander invariant equal to $\Delta_{K_n}^{(2)}$.
\end{enumerate}
\end{theo}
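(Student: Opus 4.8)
The plan is to take $K_n$ to be a cable of the figure-eight knot with \emph{minimal} framing and $J_n$ a connected sum of one figure-eight knot with trefoils, chosen so that genus and volume agree. Concretely, fix a strictly increasing sequence of prime numbers $(p_n)_{n\in\N}$ (for instance $p_n$ the $(n{+}1)$-th prime) and set
$$K_n = C_{p_n,1}(4_1), \qquad J_n = 4_1 \,\sharp\, \underbrace{3_1 \,\sharp\, \cdots \,\sharp\, 3_1}_{p_n - 1 \ \text{copies}}.$$
Both knots are fibered, since $4_1$, $3_1$, and all cables and connected sums of fibered knots are fibered. Proposition \ref{prop genus sum} gives $g(J_n) = 1 + (p_n - 1) = p_n$, while Proposition \ref{prop genus cable} gives $g(K_n) = p_n\cdot 1 + (p_n-1)(1-1)/2 = p_n$, so (1) holds. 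Because the trefoil is an iterated torus knot ($\mathrm{vol}(3_1)=0$) and cabling preserves volume, Propositions \ref{prop sum} and \ref{prop cable} give $\mathrm{vol}(J_n) = \mathrm{vol}(K_n) = \mathrm{vol}(4_1)$, which is (2). As the $p_n$ are distinct, the genera are distinct and each family consists of pairwise distinct knots.

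To prove (3) I would compute both invariants and compare their monomiality limits. By Proposition \ref{prop sum} and Corollary \ref{cor 0 31}, $\Delta^{(2)}_{J_n} \ \dot{=}\ \Delta^{(2)}_{4_1}\cdot \max(1,t)^{2(p_n-1)}$; the monomial factor does not move the point where $\Delta^{(2)}_{4_1}$ ceases to be monomial, so $\lambda_{J_n} = \lambda_{4_1}$. By Proposition \ref{prop cable}, $\Delta^{(2)}_{K_n}(t) \ \dot{=}\ \Delta^{(2)}_{4_1}(t^{p_n})$ (the factor $\max(1,t)^{(p_n-1)(1-1)}$ is trivial precisely because $q=1$), and the substitution $t\mapsto t^{p_n}$ compresses the transition region, giving $\lambda_{K_n} = \lambda_{4_1}^{1/p_n}$. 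Since $\mathrm{vol}(4_1)>0$ forces $\lambda_{4_1}>1$ by Remark \ref{rem cont}, and $p_n\geq 2$, we get $\lambda_{K_n} = \lambda_{4_1}^{1/p_n} < \lambda_{4_1} = \lambda_{J_n}$. As the monomiality limit is an invariant of the class modulo $\dot{=}$, this yields $\Delta^{(2)}_{J_n} \neq \Delta^{(2)}_{K_n}$.

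For (4), let $K'$ be any knot with $\Delta^{(2)}_{K'} = \Delta^{(2)}_{K_n}$. Theorem \ref{thm L2 volume} gives $\mathrm{vol}(K') = \mathrm{vol}(4_1)=2.029\ldots$, and I would rerun verbatim the JSJ argument from the proof of Theorem \ref{thm 41} (via \cite{adams1988volumes,agol2010minimal,cao}) to conclude that $M_{K'}$ has a single hyperbolic JSJ piece, necessarily $M_{4_1}$, all other pieces being Seifert. By \cite[Theorem 4.18]{Bud}, $K'$ is then built from $4_1$ by finitely many cablings and connected sums with iterated torus knots; in particular $K'$ is fibered, so $\lambda_{K'}$ is defined. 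The composition rules above show $\lambda_{K'} = \lambda_{4_1}^{1/P}$, where $P$ is the product of the absolute winding numbers of the cablings used; matching $\lambda_{K'} = \lambda_{4_1}^{1/p_n}$ and using that $p_n$ is prime forces exactly one nontrivial cabling, of winding number $p_n$. Theorem \ref{thm genus} gives $g(K') = p_n$; but Propositions \ref{prop genus sum} and \ref{prop genus cable} show that any knot built from $4_1$ with a single $p_n$-cabling has genus at least $p_n\cdot g(4_1)=p_n$, with equality only if there are no connected summands and the cabling is $C_{p_n,\pm 1}(4_1)$. Since $4_1$ is amphichiral, $C_{p_n,-1}(4_1) = K_n^*$, so $K' \in \{K_n, K_n^*, -K_n, -K_n^*\}$, which is detection in the sense of this paper.

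The main obstacle is this last step: controlling the Seifert genus over the entire Budney tower. Once the single nontrivial cabling is pinned to winding number $p_n$ by the monomiality limit, one must show the genus is minimized (uniquely, up to the sign of the framing) by the tight cable $C_{p_n,\pm 1}(4_1)$, ruling out every alternative distribution of genus among extra iterated-torus summands or among framings with $|q|>1$. The choices $q=1$ and $p_n$ prime are exactly what collapse the minimal-genus locus to $\{K_n,K_n^*\}$: with a larger $q$ one could trade framing for trefoil summands (for example $C_{2,3}(4_1)$ and $C_{2,1}(4_1)\,\sharp\,3_1$ share genus, volume, and $L^2$-Alexander invariant), and with composite $p$ one could refactor the cabling, so detection would fail in either case. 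Making this rigidity precise, through the behaviour of the genus under the JSJ/satellite operations, is where the real work lies.
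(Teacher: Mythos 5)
Your proposal is correct and follows essentially the same route as the paper: the same families $J_n = 4_1 \sharp (3_1)^{\sharp(p-1)}$ and $K_n = C_{p,1}(4_1)$, the same genus/volume computations, the monomiality-limit comparison $\lambda_{J_n}=\lambda_{4_1}$ versus $\lambda_{K_n}=\lambda_{4_1}^{1/p}$ for (3), and for (4) the same chain (volume $\Rightarrow$ JSJ $\Rightarrow$ Budney tower $\Rightarrow$ primality pins one cabling of winding number $p$ $\Rightarrow$ genus formula forces no summands and framing $\pm 1$). The step you flag as ``the real work'' is in fact closed in the paper exactly by the computation you already wrote down, namely $g(K) = g(I_1) + p\bigl(1+g(I_2)\bigr) + (p-1)(|b|-1)/2 = p$ forcing $g(I_1)=g(I_2)=0$ and $b=\pm 1$, so there is no remaining gap.
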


Note that the last point proves that there exists an infinite family of knots detected by the $L^2$-Alexander invariant.

\begin{proof}
Let $n \in \N$ and let $p$ be the $n$-th prime number (starting at $p=2$ for $n=0$).

Let $J_n = 4_1 \sharp (3_1)^{\sharp (p-1)}$ be the connected sum of the figure-eight knot and of $p-1$ copies of the trefoil knot. 
Let $K_n = C_{p,1}(4_1)$ be the $(p,1)$-cable of the figure-eight knot.

From Propositions \ref{prop genus sum} and \ref{prop genus cable}, we find that $g(J_n) =g(K_n) = p$, which proves $(1)$.

From Propositions \ref{prop sum} (1) and \ref{prop cable} (1), we have 
$\mathrm{vol}(J_n) = \mathrm{vol}(K_n) = \mathrm{vol}(4_1)$, which proves $(2)$.

Let us prove $(3)$.
Let $T = (t \mapsto \max(1,t)^2)$ be a representative of $\Delta_{3_1}^{(2)}$ and $F$ be a representative of $\Delta_{4_1}^{(2)}$ such that $F(t) \sim_{t \to 0^+} 1$ and $F(t) \sim_{t \to + \infty} t^2$.
Then, by Proposition \ref{prop sum} (2),
$\Delta_{J_n}^{(2)} \ \dot{=} \ (t \mapsto F(t)T(t)^{p-1})$ and
by Proposition \ref{prop cable} (2), $\Delta_{K_n}^{(2)} \ \dot{=} \ (t \mapsto F(t^p))$.
From Example \ref{ex 41 fibered}, the monomiality limit for the figure-eight knot (and the map $F$) is $\lambda_F > 1$, whereas the one for the trefoil knot is $\lambda_T = 1$.
Hence $\lambda_{J_n} = \max(\lambda_F, \lambda_T) = \lambda_F$ is different from $\lambda_{K_n} = \lambda_F ^{1/p}$.
Thus the $L^2$-Alexander invariants $\Delta_{J_n}^{(2)}$ and $\Delta_{K_n}^{(2)}$ are different, which proves $(3)$.

Finally, let us prove $(4)$.
Let $K$ be a knot such that $\Delta_{K}^{(2)} = \Delta_{K_n}^{(2)}$. Thus, from what precedes, $g(K) = p$, $\mathrm{vol}(K) = \mathrm{vol}(4_1)$ and $\lambda_K = \lambda_F ^{1/p}$.
Since $\mathrm{vol}(K) = \mathrm{vol}(4_1)$, $K$ is obtained from $4_1$ by cablings and connected sums with iterated torus knots (see the proof of Theorem \ref{thm 41}). Thus
$$K = (S_{I_1} \circ C_{a_1,b_1} \circ S_{I_2} \circ \ldots \circ C_{a_k,b_k} \circ S_{I_{k+1}} ) (4_1)$$ where the $I_j$ are (possibly trivial) iterated torus knots and the $(a_j,b_j)$ are pairs of relatively prime numbers.
Therefore $\lambda_K = \lambda_F^{\frac{1}{|a_1 . \ldots . a_k|}}$, hence $p = |a_1 . \ldots a_k|$. But since $p$ is prime, this means that only one of the $a_i$ is equal to $\pm p$ and the other $a_i$ are equal to $\pm 1$. Up to summing and reversing the orientation of some of the iterated torus knots $I_i$, we can thus assume that 
$$K = (S_{I_1} \circ C_{\pm p,b} \circ S_{I_2})(4_1)$$ where $b$ is prime with $p$.
Therefore, from Propositions \ref{prop genus sum} and \ref{prop genus cable}, $$g(K) = g(I_1) + p(1 + g(I_2)) + (p-1)(|b|-1)/2.$$ Since we know that $g(K)=p$, we deduce that $g(I_1) = g(I_2) =0$ and $b=\pm 1$. Thus $K = C(\pm p,\pm 1)(4_1)$, i.e. 
$K \in \{K_n, -K_n, K_n^*, -K_n^*\}$, which proves $(4)$.

\end{proof}


\bibliographystyle{plain}

\bibliography{bibliothese}

\end{document}